\newtheorem{Theorem}{Theorem}[section] 
\newtheorem{Definition}[Theorem]{Definition}
\newtheorem{Proposition}[Theorem]{Proposition}
\newtheorem{Lemma}[Theorem]{Lemma}
\newtheorem{Corollary}[Theorem]{Corollary}
\newtheorem{Remark}[Theorem]{Remark}
\def\R{\mathbb R}
\def\N{\mathbb N}
\def\E{\mathbb E}
\def\P{\mathbb P}
\def\ds{\displaystyle}
\title{An integral inequality for  the invariant measure  of a stochastic reaction--diffusion equation}
\author{Giuseppe Da Prato
\\\normalsize  Scuola Normale Superiore di Pisa 
\and
Arnaud Debussche\\
\normalsize IRMAR and \'Ecole Normale Sup\'erieure de Rennes\\\\
Dedicated to Jan Pr\"uss}
\date{}
\begin{document}

\maketitle

\begin{abstract}
We consider a reaction--diffusion equation  perturbed by noise (not necessarily white). We prove an integral inequality for  the invariant measure $\nu$ of a stochastic reaction--diffusion equation. Then we discuss some consequences as an integration by parts formula which extends to $\nu$ a basic identity of the Malliavin Calculus.
Finally, we prove the existence of a surface measure for a ball and a half-space of $H$.

\end{abstract}

\noindent {\bf 2010 Mathematics Subject Classification AMS}: 60H15, 35K57, 28C20\medskip

\noindent {\bf Key words}: Reaction diffusion equations, invariant measures, Fomin differentiability, surface integrals in Hilbert spaces.

\section{Introduction and preliminaries}
  In the  recent paper \cite{DaDe14}  the following  inequality   involving the invariant measure $\nu$  of the Burgers  equation    was proved,
 \begin{equation}
\label{e5.1}
\left|\int_H \langle    (-A)^{-\alpha/2}D\varphi,z\rangle\,d\nu   \right|\le C_p\|\varphi\|_{L^p(H,\nu)}\,|z|,\quad \alpha>1,
\end{equation}
 for all $\varphi\in C^1_b(H)$,  $z\in H$ and all $p>1$. Here  $A$ is  the Laplace operator equipped with Dirichlet boundary conditions,  $\alpha>1$ and $D$ represents the gradient. As noticed in that paper inequality  \eqref{e5.1} implies  that  $(-A)^{-\alpha/2}D$ is closable in $L^p(H,\nu)$ for all $p\ge 1$. Moreover, for each $z\in H$ there exists $v_z\in L^p(H,\nu)$ for all $p>1$ such that
\begin{equation}
\label{e5.2}
\int_H \langle    (-A)^{-\alpha/2}D\varphi,z\rangle\,d\nu   =\int_H v_z\, \varphi\,d\nu,\quad\forall\;\varphi\in C^1_b(H).
\end{equation}
 We recall that if $\nu=N_Q$  (the Gaussian measure of mean  $0$ and covariance $Q$),  identity \eqref{e5.2} with  $A=-\frac12\,Q^{-1}$ and $\alpha=1$ is well known in Malliavin Calculus. In this case the adjoint $(Q^{1/2}D)^*$ of $Q^{1/2}D$ is  called the Skorhood operator.
 Moreover, \eqref{e5.2}  implies   that $\nu$ is Fomin differentiable in all directions of  $D((-A)^{\alpha/2})$ for $\alpha>1$.  For the definition  of Fomin differentiability see e. g. \cite[Definition 1]{Pu98}.\medskip

The aim of the present paper is to extend inequality \eqref{e5.1} to the following reaction--diffusion equation in $H=L^2(\mathcal O)$ where $\mathcal O$  is a bounded domain of $\R^n$ with sufficiently regular boundary.
\begin{equation}
\label{e4.1}
\left\{\begin{array}{lll}
dX(t)=[AX(t)+p(X(t))]dt+(-A)^{-\gamma/2}dW(t),\\
\\
X(0)=x.
\end{array}\right.
\end{equation}
where  $A$ is  the realization of the Laplace operator  $\Delta_\xi $ equipped with
Dirichlet boundary conditions,
$$
Ax=\Delta_\xi x,\quad x\in D(A),\quad D(A)=H^2(\mathcal O)\cap H^1_0(\mathcal O),
$$
 and    $\frac{n}2-1<\gamma<1$ (which  
obviously  implies that $n\le 3$). Moreover, $p$ is a decreasing polynomial   
of odd degree equal to $N>1$ and  $W$  is an $H$--valued cylindrical Wiener process  on a filtered
probability space
$(\Omega,\mathcal F, (\mathcal F_t)_{t>0},\P)$. 

 It is well known that   equation \eqref{e4.1} has a unique strong solution and that the associate transition semigroup
$$
P_t\varphi(x)=\E[\varphi(X(t,x))],\quad \varphi\in B_b(H)
$$
 possesses a unique invariant measure $\nu$ such that
 \begin{equation}
 \label{e1.4g}
 M_m:=\int_H|x|^m\,\nu(dx)<\infty,\quad\forall\,m\in\N,
 \end{equation}
  see e.g. \cite{Da04}.

 If $\gamma <\frac{n}2-1$, equation \eqref{e4.1} is not expected to have solutions with positive 
 spatial regularity and the equation has to be renormalized. This has been studied in \cite{DaDe-03} for $n=2$ and more recently in \cite{Hai14} and \cite{CaCh14} for $n=3$. 
 \begin{Remark}
 \em
 All following results remain true replacing  $(-A)^{-\gamma/2}$ with  $B=G(-A)^{-\gamma/2}$ and $G\in L(H)$.
  Also, the assumption   $p$   decreasing   could be replaced by $p'$  bounded above.
  
  \end{Remark}
  The main result of the paper is the following
    \begin{Theorem}
  \label{t1}
  Let  $\delta\in (0,1-\gamma),\;p\in (1,\infty)$.Then
   there exists  $C_p>0$ such that  for all       $\varphi\in L^p(H,\nu)$   we have
 $$
\int _H \langle D\varphi(x),h\rangle  \, \nu (dx) \le C_p\|\varphi\|_{L^p(H,\nu)} \,|h|_{1+\delta+\gamma},\quad \forall\;h\in H^{ 1+\delta+\gamma}(\mathcal O).
$$

 \end{Theorem}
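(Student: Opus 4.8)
The plan is to reduce the statement to $\varphi\in C^1_b(H)$ — the bound then extends to all $\varphi\in L^p(H,\nu)$ by density, and in particular $\langle D\cdot,h\rangle$ is closable in $L^p(\nu)$, exactly as in the discussion following \eqref{e5.1} — and then to run a Bismut–Elworthy–Li argument along the Kolmogorov flow $P_t$, all quantities below being finite thanks to \eqref{e1.4g}. Write $b(x)=Ax+p(x)$, so $Db(x)=A+p'(x)$ (with $p'(x)$ the multiplication operator by $p'(x(\cdot))$), and let $\eta^v(\cdot,x)$ solve the linearised equation $\partial_s\eta=(A+p'(X(s,x)))\eta$, $\eta(0)=v$. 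Since $p$ is decreasing, $p'\le0$, hence $\tfrac{d}{ds}|\eta^v(s,x)|^2\le-2\lambda_1|\eta^v(s,x)|^2$ and $\int_0^\infty|(-A)^{1/2}\eta^v(s,x)|^2\,ds\le\tfrac12|v|^2$, where $\lambda_1>0$ is the first eigenvalue of $-A$; this dissipativity of the linearisation, uniform in $x$, is what makes the whole argument converge.

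First I would establish the identity
\[
\int_H\langle D\varphi(x),h\rangle\,\nu(dx)=-\int_0^\infty\!\!\int_H\langle D(P_s\varphi)(x),(A+p'(x))h\rangle\,\nu(dx)\,ds .
\]
It follows by differentiating the Kolmogorov equation $\partial_sP_s\varphi=LP_s\varphi$ in the direction $h$, which yields $\partial_s\langle DP_s\varphi,h\rangle=L\langle DP_s\varphi,h\rangle+\langle DP_s\varphi,(A+p'(x))h\rangle$; integrating against the invariant measure annihilates the $L$–term, and integrating in $s$ over $(0,\infty)$ uses $\langle D(P_s\varphi),h\rangle\to\langle D\varphi,h\rangle$ as $s\to0$ (bounded convergence) together with $\int_H\langle D(P_s\varphi),h\rangle\,d\nu\to0$ as $s\to\infty$, which is immediate from $|D(P_s\varphi)(x)|\le e^{-\lambda_1 s}\|D\varphi\|_\infty$ (again dissipativity, now of the direct flow). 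The differentiations and the a priori integrability needed to justify this identity are most cleanly carried out on the Galerkin approximations of \eqref{e4.1}, whose invariant measures converge to $\nu$ with uniform moment bounds, or directly from the regularity theory of $P_t$ in \cite{Da04}.

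The heart of the proof is a bound on $\int_H\langle D(P_s\varphi)(x),(A+p'(x))h\rangle\,\nu(dx)$, obtained from the Bismut–Elworthy–Li formula with a time cut-off $\beta$ equal to $1$ on $[0,s/2]$ and to $0$ at $s$:
\[
\langle D(P_s\varphi)(x),v\rangle=\frac{2}{s}\,\mathbb E\Big[\varphi(X(s,x))\int_{s/2}^s\langle(-A)^{\gamma/2}\eta^v(r,x),dW(r)\rangle\Big],
\]
applied with $v=(A+p'(x))h$. Here $v\in H^{-(1-\delta-\gamma)}(\mathcal O)$ with $|v|_{-(1-\delta-\gamma)}\le C\,(1+|x|^{N-1})\,|h|_{1+\delta+\gamma}$ — one uses $Ah\in H^{-1+\delta+\gamma}$, that $p'$ has degree $N-1$, and the embeddings available because $\gamma>\tfrac n2-1$, together with \eqref{e1.4g}; the condition $\delta<1-\gamma$ is exactly what keeps $1-\delta-\gamma\ge0$ so that $p'(x)h\in L^2$ lands in $H^{-(1-\delta-\gamma)}$. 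Parabolic smoothing gives $|(-A)^{\gamma/2}e^{rA}v|\le C\,r^{-(1-\delta)/2}|v|_{-(1-\delta-\gamma)}$, whose exponent exceeds $-1$ precisely because $\delta>0$; the nonlinear feedback $\int_0^re^{(r-\tau)A}p'(X(\tau,x))\eta^v(\tau,x)\,d\tau$ is handled by bootstrapping, exploiting that $p'\le0$ makes $A+p'(X)$ generate a contraction and that only an $L^2$-in-time bound on $(-A)^{\gamma/2}\eta^v$ is needed. Combining the small-time smoothing with the dissipative exponential decay for large time yields $\mathbb E\big(\int_{s/2}^s|(-A)^{\gamma/2}\eta^v(r,x)|^2dr\big)^{p'/2}\le C\,(1+|x|)^{mp'}|h|_{1+\delta+\gamma}^{p'}\min(s^{\delta p'/2},e^{-cp's})$. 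Burkholder–Davis–Gundy, Hölder in $\omega$ and (with exponents $p,p'$) in $x$, and the invariance $\int_HP_s|\varphi|^p\,d\nu=\|\varphi\|_{L^p(\nu)}^p$ then give
\[
\Big|\int_H\langle D(P_s\varphi)(x),(A+p'(x))h\rangle\,\nu(dx)\Big|\le C_p\,\|\varphi\|_{L^p(\nu)}\,|h|_{1+\delta+\gamma}\,\min\!\big(s^{\delta/2-1},\,s^{-1}e^{-cs}\big).
\]

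Finally I would integrate over $s\in(0,\infty)$: the singularity $s^{\delta/2-1}$ at the origin is integrable because $\delta>0$, and $s^{-1}e^{-cs}$ is integrable at infinity, so the identity of the second paragraph gives $\big|\int_H\langle D\varphi,h\rangle\,d\nu\big|\le C_p\|\varphi\|_{L^p(\nu)}|h|_{1+\delta+\gamma}$ for $\varphi\in C^1_b(H)$; replacing $\varphi$ by $-\varphi$ removes the absolute value, and density concludes. The main obstacle is the core estimate of the third paragraph: giving meaning to $\langle D(P_s\varphi)(x),v\rangle$ for the negative-order direction $v=(A+p'(x))h\in H^{-(1-\delta-\gamma)}$ and obtaining the $L^2$-in-time bound on $(-A)^{\gamma/2}\eta^v(r,x)$ with the sharp $s\to0$ rate — this is where the smoothing of the heat semigroup, the dissipativity of the linearisation, and the hypothesis $0<\delta<1-\gamma$ must be balanced against one another, and where the nonlinear feedback near the time singularity has to be controlled.
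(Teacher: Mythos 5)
Your overall architecture coincides with the paper's: the identity in your second paragraph is exactly the $\nu$--integrated form of \eqref{e7}, and the core of the argument is, as in the paper, the Bismut--Elworthy--Li formula \eqref{e4.10} applied in the $x$--dependent direction $v=(A+p'(x))h$, measured in $H^{-\alpha}$ with $\alpha=1-\gamma-\delta$, so that $|Ah|_{-\alpha}=|h|_{1+\delta+\gamma}$ and the resulting singularity $s^{\delta/2-1}$ is integrable at the origin. Your variations --- integrating over $(0,\infty)$ with exponential decay instead of stopping at $t=1$ and keeping the boundary term $J_1$; Galerkin instead of Yosida regularisation of $p$; a time cut-off in the BEL formula --- are harmless, and your bookkeeping of exponents is consistent with the paper's.

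The genuine gap is the estimate you yourself flag as ``the main obstacle'', namely
\begin{equation*}
\int_0^s\bigl|(-A)^{\gamma/2}\eta^v(r,x)\bigr|^2\,dr\le C\,s^{\delta}\,\Delta\,|v|_{-\alpha}^2,
\qquad v\in H^{-\alpha}(\mathcal O),\ \alpha=1-\gamma-\delta,
\end{equation*}
with $\Delta$ controlled by moments of $X$. This is precisely the content of the paper's Section 2 (Lemmas \ref{l2.2e}, \ref{l2.3e} and Corollary \ref{c2.3e}), and it is \emph{not} obtained there by Duhamel plus bootstrap. The difficulty with your proposed route is that the initial datum $v$ lies only in $H^{-\alpha}$, so near $\tau=0$ the feedback term $\int_0^r e^{(r-\tau)A}p'(X(\tau))\eta^v(\tau)\,d\tau$ must be controlled with $\eta^v(\tau)$ itself available only in negative-order spaces, and the multiplication operator $p'(X(\tau))$ does not act boundedly on $H^{-\alpha}$; closing a fixed point with the sharp $s^\delta$ gain is exactly what is at stake and is not delivered by ``$p'\le 0$ makes $A+p'(X)$ a contraction'' (that gives contractivity on $H$, not on $H^{-\alpha}$, and says nothing about the $H^{1-\alpha}$ smoothing). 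The paper circumvents all of this by duality: the adjoint of $h\mapsto\eta^h_\epsilon$ is realised by the backward problem \eqref{e2.9e}, in whose $L^2$ and $H^\alpha$ energy estimates the sign of $p'_\epsilon$ enters favourably, yielding $\int_0^T|\eta^h_\epsilon|^2_{1-\alpha}\,dt\le C\Delta|h|^2_{-\alpha}$ and $|\eta^h_\epsilon(t)|_{-\alpha}\le C\Delta|h|_{-\alpha}$; the $s^\delta$ rate then comes from interpolating the $|\cdot|_{1-\alpha-\delta}$ norm between these two bounds and applying H\"older in time. Until a proof of this step is supplied, the core inequality of your third paragraph is an assertion rather than an argument.
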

 \begin{Remark}
 \em Theorem \ref{t1} has  already been proved in the paper \cite{DaDe15} in  the particular case when $\delta=1-\gamma$.
 As we shall see, the general case  requires new tools as the estimates of $DX(t,x)$ in  some Sobolev spaces, to which is devoted Section 2 below.
 
 \end{Remark}

 Let us describe the content of the paper. In Section 3 we  use the estimates obtained in Section 2 to prove Theorem \ref{t1} and we discuss some consequences proving moreover a   basic integration by parts formula, see \eqref{e3.14e} below. Finally, Section 4 is devoted to discuss surface integrals with respect to the measure $\nu$, in particular for balls and half--spaces. \bigskip

 We conclude this section with some notation and preliminary.
   The norm of $H=L^2(\mathcal O)$ will be  denoted by $|\cdot|$ and the inner product by $ \langle\cdot,\cdot   \rangle$. For $p\ge 1$, $|\cdot|_{L^p}$ is  the norm of $L^p(\mathcal O)$. 
The operator $A$  is self--adjoint negative and there exist an orthonormal basis $(e_h)$ in $H$ and an increasing  sequence of positive numbers
$(\alpha_h)$ such that
\begin{equation}
\label{e1.8}
Ae_h=-\alpha_h e_h,\quad\forall\;h\in\N.
\end{equation}
For any $\alpha\in\R$, $(-A)^\alpha$ denotes the 
$\alpha$ power of the operator $-A$ and   $|\cdot|_\alpha$ is the norm of $D((-A)^{\alpha/2})$ which is equivalent to the norm of the Sobolev space $H^\alpha(\mathcal O)$. We have
$|\cdot|_0=|\cdot|=|\cdot|_{L^2}$. We shall use the interpolatory estimate
\begin{equation}
\label{e1.10}
|x|_b\le |x|^{\frac{c-b}{c-a}}_a\;|x|^{\frac{b-a}{c-a}}_c,\quad -\infty<a<b<c<+\infty.
\end{equation}
Let $\alpha\in (0,1)$.
By the Sobolev embedding theorem we have
$H^{ \alpha}(\mathcal O)\subset L^q(\mathcal O),$ where $q=\tfrac{2n}{n-\alpha}.$
By duality it follows that $L^{\frac{2n}{n+\alpha}}(\mathcal O)\subset H^{-\alpha}(\mathcal O)$
and therefore
\begin{equation}
\label{e1.12}
|x|_{-\alpha}\le c|x|_{\frac{2n}{n+\alpha}},\quad \forall\,\alpha\in(0,1),\,x\in L^{\frac{2n}{n+\alpha}}(\mathcal O) ,
\end{equation}
for a suitable constant $c>0$.\medskip

Moreover, it is convenient to introduce  the following approximating problem
\begin{equation}
\label{e4.9}
\left\{\begin{array}{l}
dX_\epsilon (t)=(AX_\epsilon (t)+p_\epsilon (X_\epsilon (t))dt+(-A)^{-\gamma/2}dW(t),\\
\\
X_\epsilon (0)=x\in H,
\end{array}\right.
\end{equation}
where for any $\epsilon >0,$   $p_\epsilon$  are the Yosida approximations of $p$, that is
$$
p_{\epsilon}(r)= \frac{1}{\epsilon}
\;(r-J_{\epsilon}(r)),\;J_{\epsilon}(r)=(1-\epsilon p(\cdot))^{-1}(r),\quad r\in \R.
$$ 
Let us  denote by $P^\epsilon_t$  the approximate transition semigroup 
$$
P_t^\epsilon\varphi(x)=\E[\varphi(X_\epsilon(t,x))],\quad \varphi\in B_b(H)
$$
 and recall   the following  Bismut-Elworthy-Li formula, see \cite{Ce01}.
 \begin{equation}
\label{e4.10}
 \langle DP^\epsilon _t\varphi(x),h  \rangle=\frac1t\;\E\left[\varphi(X_\epsilon (t,x))\int_0^t\langle 
(-A)^{\frac{\gamma}{2}}\eta_\epsilon^h(s,x),dW(s) \rangle  \right],\quad h\in H, 
\end{equation}
 where  $\eta_\epsilon^h(t,x)=:DX_\epsilon(t,x)\cdot h$  is the $x$--derivative of $X_\epsilon(t,x)$   in the direction $h$. As well known, $\eta_\epsilon^h(t,x)$ is the  solution of the equation
 \begin{equation}
\label{e5b}
\left\{\begin{array}{lll}
 \ds\frac{d}{dt}\;\eta_\epsilon ^{h}(t,x)&=&A\eta_\epsilon ^{h}(t,x)-p'_\epsilon(X_\epsilon(t,x))
 \eta_\epsilon^{h}(t,x),\\
 \\
  \eta_\epsilon^{h}(0,x)&=&h.
  \end{array}\right.
\end{equation}
Let us recall the following elementary but useful identity (proved   in   \cite{DaDe15}).
 For any  $\varphi\in C^1_b(H)$,  $\epsilon>0$, $h,x\in D(A)$,  we have
  \begin{equation}
\label{e7}
\begin{array}{l}
  \ds P^\epsilon_t(\langle D\varphi,h\rangle)= \langle DP^\epsilon_t\varphi, h\rangle 
  \ds-   \int_0^tP^\epsilon_{t-s}(\langle Ah+p_\epsilon' h  ,D P^\epsilon_s\varphi\rangle)ds.
  \end{array}
\end{equation}\medskip

  \section{Estimates of $\eta_\epsilon^{h}(t,x)$ and of $DP^\epsilon_t\varphi(x)$}
  
  Let us fix $T>0$, $x\in H$. We are going to state some
estimates of $\eta_\epsilon^{h}(t,x)$ in the norms $|\cdot|_{-\alpha}$ and  $|\cdot|_{1-\alpha}$, needed below. We shall denote by $\Delta_{T,\epsilon}(X)$ the random variable
    \begin{equation}
\label{e2.1f}
\Delta_{T,\epsilon}(X)=\sup_{t\in[0,T]}|p_\epsilon'(X_\epsilon(t,x))|_\infty^2+1  .\end{equation}
 \begin{Lemma}
\label{l2.2e}
    For any $\alpha\in (0,1)$ we have
        \begin{equation}
\label{e2.7e}
\int_0^T|\eta_\epsilon^h(t,x)|^2_{1-\alpha}\,dt\le C(T)\Delta_{T,\epsilon}(X)\, |h|^2_{-\alpha},\quad\forall\;h\in H^{-\alpha}(\mathcal O).
\end{equation}
\end{Lemma}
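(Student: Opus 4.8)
The estimate \eqref{e2.7e} is an $L^2$-in-time energy estimate for the linearized equation \eqref{e5b}, and the natural approach is to test that equation against $(-A)^{-\alpha}\eta_\epsilon^h(t,x)$. Writing $\eta(t):=\eta_\epsilon^h(t,x)$ and $V(t):=p'_\epsilon(X_\epsilon(t,x))$, one gets
\[
\frac12\frac{d}{dt}|\eta(t)|_{-\alpha}^2+|\eta(t)|_{1-\alpha}^2=-\langle V(t)\eta(t),(-A)^{-\alpha}\eta(t)\rangle.
\]
The sign of the right-hand side is not favorable in general (we only know $p$ is decreasing, i.e.\ $V\le 0$, which would help for $\alpha=0$ but the weight $(-A)^{-\alpha}$ destroys that structure), so I would simply bound the right-hand side in absolute value. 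The key is to estimate $|\langle V\eta,(-A)^{-\alpha}\eta\rangle|\le |V|_\infty\,|\eta|\,|(-A)^{-\alpha}\eta|\le |V|_\infty\,|\eta|_{-\alpha}\,|\eta|_{\alpha}$ — wait, more carefully: $\langle V\eta,(-A)^{-\alpha}\eta\rangle\le |V|_\infty\,|\eta|_{L^2}\,|(-A)^{-\alpha}\eta|_{L^2}=|V|_\infty\,|\eta|_0\,|\eta|_{-2\alpha}\le |V|_\infty\,|\eta|_0\,|\eta|_{-\alpha}$, and then interpolate $|\eta|_0\le |\eta|_{-\alpha}^{(1-\alpha)/1}\,|\eta|_{1-\alpha}^{\alpha/1}$ via \eqref{e1.10} (with $a=-\alpha$, $b=0$, $c=1-\alpha$), giving a bound $|V|_\infty\,|\eta|_{-\alpha}^{2-\alpha}\,|\eta|_{1-\alpha}^{\alpha}$. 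Applying Young's inequality with exponents $2/\alpha$ and $2/(2-\alpha)$ absorbs a fraction of $|\eta|_{1-\alpha}^2$ into the left-hand side and leaves a term $C\,|V|_\infty^{2/(2-\alpha)}\,|\eta|_{-\alpha}^2$.

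**Carrying it out.** After absorption this yields a differential inequality of the form
\[
\frac{d}{dt}|\eta(t)|_{-\alpha}^2+|\eta(t)|_{1-\alpha}^2\le C\big(|V(t)|_\infty^{2/(2-\alpha)}+1\big)\,|\eta(t)|_{-\alpha}^2.
\]
Since $2/(2-\alpha)\le 2$ for $\alpha\in(0,1)$, one has $|V(t)|_\infty^{2/(2-\alpha)}\le |V(t)|_\infty^2+1\le \Delta_{T,\epsilon}(X)$ pointwise in $t$, so the coefficient is bounded by $C\,\Delta_{T,\epsilon}(X)$. Dropping the good term and applying Gronwall gives $|\eta(t)|_{-\alpha}^2\le e^{C T\Delta_{T,\epsilon}(X)}|h|_{-\alpha}^2$; then reinserting this bound into the integrated inequality (keeping the good term this time) produces
\[
\int_0^T|\eta(t)|_{1-\alpha}^2\,dt\le |h|_{-\alpha}^2+C\,\Delta_{T,\epsilon}(X)\int_0^T|\eta(t)|_{-\alpha}^2\,dt\le C(T)\,\Delta_{T,\epsilon}(X)\,e^{CT\Delta_{T,\epsilon}(X)}\,|h|_{-\alpha}^2.
\]
The exponential factor $e^{CT\Delta_{T,\epsilon}(X)}$ is stronger than the linear dependence on $\Delta_{T,\epsilon}(X)$ claimed in \eqref{e2.7e}, so either the statement should read $\le C(T)\,\Delta_{T,\epsilon}(X)\,e^{C\,\Delta_{T,\epsilon}(X)}|h|_{-\alpha}^2$ or — more likely — one must use the one-sided bound $V=p'_\epsilon(X_\epsilon)\le 0$ to keep the drift term dissipative and avoid Gronwall entirely; I expect the paper to exploit that sign.

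**The main obstacle.** The delicate point is precisely how to treat the reaction term $\langle p'_\epsilon(X_\epsilon)\eta,(-A)^{-\alpha}\eta\rangle$ without an exponential blow-up in $\Delta_{T,\epsilon}(X)$, since a crude Gronwall argument gives exponential dependence while the lemma asks only for linear dependence. The resolution should be to write this term, using that $p'_\epsilon\le 0$, as $-\langle (-p'_\epsilon(X_\epsilon))\eta,(-A)^{-\alpha}\eta\rangle$ and handle it via a commutator/duality trick: split $(-A)^{-\alpha}$ through $(-A)^{-\alpha/2}$ on each side and estimate $|(-A)^{-\alpha/2}\eta|_0$ by $|\eta|_{-\alpha}$, paying for the lost dissipativity only through the factor $|p'_\epsilon|_\infty$ that sits linearly, after one Young inequality, in front of $|\eta|_{-\alpha}^2$ rather than in an exponent. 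Concretely: bound the reaction term by $\tfrac12|\eta|_{1-\alpha}^2+C|p'_\epsilon(X_\epsilon)|_\infty^2|\eta|_{-\alpha-1}^2\le \tfrac12|\eta|_{1-\alpha}^2+C|p'_\epsilon(X_\epsilon)|_\infty^2|\eta|_{-\alpha}^2$, absorb the first term, drop the now-negative remaining contribution or Gronwall only against the $|p'_\epsilon|_\infty^2$ term — and here one genuinely needs $|p'_\epsilon|_\infty^2$ integrated or bounded, which is exactly $\Delta_{T,\epsilon}(X)$; if the paper's $\Delta_{T,\epsilon}$ is a supremum, the exponential seems unavoidable unless the dissipativity of $A$ alone (the term $|\eta|_{1-\alpha}^2\ge c|\eta|_{-\alpha}^2$ by the spectral gap) is used to close a linear-in-$\Delta$ bound over the finite interval $[0,T]$. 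I would therefore organize the proof around: (i) the test-function identity, (ii) interpolation $+$ Young to reduce to a scalar ODE inequality, (iii) a careful use of both the dissipativity of $A$ and the sign of $p'_\epsilon$ to obtain linear rather than exponential dependence on $\Delta_{T,\epsilon}(X)$, integrating in time at the end to recover \eqref{e2.7e}.
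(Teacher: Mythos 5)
Your forward energy estimate correctly identifies the obstruction but does not overcome it: as you yourself observe, testing \eqref{e5b} against $(-A)^{-\alpha}\eta$ destroys the sign of the reaction term, and the Gronwall step then yields a factor $e^{CT\Delta_{T,\epsilon}(X)}$ in place of the linear factor $\Delta_{T,\epsilon}(X)$ claimed in \eqref{e2.7e}. This is not a cosmetic loss: in Lemma \ref{l3.1e} one needs $\E[(\Delta_{T,\epsilon}(X))^{p'/2}]<\infty$, which holds because $\Delta_{T,\epsilon}$ is polynomial in $\sup_{t}|X_\epsilon(t,x)|_\infty$ and the solution has all polynomial moments; an exponential of $\Delta_{T,\epsilon}$ would require exponential moments that are not available, so the weaker estimate is useless downstream. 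Your proposed repairs do not close the gap: the inequality $|p_\epsilon'(X_\epsilon)\eta|_{-\alpha-1}\le |p_\epsilon'(X_\epsilon)|_\infty\,|\eta|_{-\alpha-1}$ is false in general, since multiplication by a merely bounded function is not bounded on negative-order Sobolev spaces, and the spectral gap of $A$ alone cannot dominate a coefficient $\Delta_{T,\epsilon}(X)$ that may be arbitrarily large.

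The paper's proof is by duality, and the duality is essential rather than stylistic. One regards $h\mapsto\eta_\epsilon^h(\cdot,x)$ as a map $U:H^{-\alpha}\to L^2(0,T;H^{1-\alpha})$ and bounds instead $U^*$, i.e. the solution $\zeta$ of the backward adjoint problem \eqref{e2.9e} with $\zeta(T)=0$ and forcing $F$. The estimate is then done in two tiers: first an $L^2$ estimate, in which the reaction term $-\int_{\mathcal O}p_\epsilon'(X_\epsilon)\,\zeta^2$ has the good sign (this is exactly where $p_\epsilon'\le 0$ enters) and which gives $|\zeta(t)|^2\le\int_0^T|F(s)|_{-1}^2\,ds$ with no factor of $\Delta_{T,\epsilon}$ at all; second, the $H^\alpha$ estimate, whose bad term is $|p_\epsilon'|_\infty^2\,|\zeta|^2$ with the \emph{already controlled} $L^2$ norm, so that a single integration in time yields the bound with $\Delta_{T,\epsilon}(X)$ appearing linearly and no Gronwall lemma. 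This two-tier scheme is unavailable in your forward formulation because the primal datum $h$ lies only in $H^{-\alpha}$, so there is no $L^2$ a priori bound on $\eta$ to feed into the higher-order estimate; in the dual problem the datum is $\zeta(T)=0$ and the forcing lies in $L^2(0,T;H^{\alpha-1})\subset L^2(0,T;H^{-1})$, so the $L^2$ tier comes for free. The proof should be restructured around this duality.
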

\begin{proof}
We shall proceed by duality. Let us consider the mapping
$$
U:H\to L^2(0,T;H),\quad h\to \eta^h(\cdot,x),
$$
(here $x\in H$ and $\omega\in\Omega$ are fixed.)
 Let us find the adjoint $U^*$ of $U$.  Write for $h\in H$, $F\in L^2(0,T;H),$
  $$
\begin{array}{l}
 \ds\langle U(h),F   \rangle_{L^2(0,T;H)}=\int_0^T   \langle\eta_\epsilon^h(t,x),F(t)   \rangle\,dt\\
 \\
 \ds=\int_0^T   \langle G(t,0,x)h,F(t)   \rangle\,dt=
 \left<h, \int_0^TG(t,0,x)F(t)\,dt    \right>,
 \end{array} 
$$
where $G(t,s,x)$ is the evolution operator of the family of linear operators \break $(A+p_\epsilon'(X(t,x)))_{t\in[0,T]}$. Noticing the $G(t,s,x)$ is symmetric
we have
\begin{equation}
\label{e2.8e}
U^*(F)=\zeta(0),\quad\forall\; F\in L^2(0,T;H),
\end{equation}
where $\zeta$ is the solution of the problem
\begin{equation}
\label{e2.9e}
\left\{\begin{array}{l}
\zeta'(t)=-A\zeta(t)-p_\epsilon'(X(t,x))\zeta(t)+F(t),\\
\\
\zeta(T)=0.
\end{array}\right. 
\end{equation}

Now \eqref{e2.7e} is equivalent to
\begin{equation}
\label{e2.10e}
\int_0^T|(-A)^{\frac{1-\alpha}{2}}U(h)(s)|^2\,ds\le C(T)\,\Delta_{T,\epsilon}(X) |(-A)^{-\frac{\alpha}{2}}h|^2,
\end{equation}
which  by duality it equivalent to
\begin{equation}
\label{e2.11e}
 |(-A)^{\frac{\alpha}{2}}\zeta(0)|^2\le C(T)\,\Delta_{T,\epsilon}(X)\int_0^T|(-A)^{\frac{\alpha-1}{2}}F(t)|^2\,dt.
 \end{equation}
 To prove \eqref{e2.11e}, we shall proceed in two steps.\medskip

{\it Step 1}. We have
\begin{equation}
\label{e2.12e}
|\zeta(t)|^2\le \int_0^T |F(s)|^2_{-1}ds
\end{equation}

Taking the inner product in \eqref{e2.9e}  with $\zeta$ and recalling that $p_\epsilon'(r)\le 0$, we find
$$
\begin{array}{lll}
\ds\frac12\,\frac{d}{dt}\,|\zeta(t)|^2&=&\ds|\zeta(t)|_1^2-\int_\mathcal Op_\epsilon'(X_\epsilon(t,x))\,\zeta(t)^2\,dx+\int_\mathcal O F(t)\,\zeta(t)\,dx\\
\\
&\ge&\ds  |\zeta(t)|_1^2-|\zeta(t)|_1\,|F(t)|_{-1}\\
\\
&\ge&\ds\frac12\, |\zeta(t)|_1^2 -\frac12\,|F(t)|^2_{-1}.
\end{array}
$$
It follows that
$$
\frac{d}{dt}\, |\zeta(t)|^2 \ge  |F(t)|^2_{-1},\quad\forall\;t\in[0,T].
$$
Integrating from $t$ and $T$, yields  \eqref{e2.12e}. \medskip

{\it Step 2}. Conclusion.\medskip

Now we take the inner product in \eqref{e2.9e}  with $(-A)^\alpha\zeta$,
$$
\begin{array}{lll}
\ds\frac12\,\frac{d}{dt}\,|\zeta(t)|_\alpha^2&=&\ds |\zeta(t)|_{\alpha+1}^2-\int_\mathcal Op_\epsilon'(X_\epsilon(t,x))\,\zeta(t)\,(-A)^\alpha\zeta(t)\,dx\\
\\
&&\ds+\int_\mathcal O F(t)\,(-A)^\alpha\zeta(t)\,dx\\
\\
&\ge&\ds  |\zeta(t)|_{\alpha+1}^2-|p_\epsilon'(X_\epsilon(t,x))|_\infty\,|\zeta(t)|\,|\zeta(t)|_{2\alpha}-|\zeta(t)|_{\alpha+1}\,|F(t)|_{\alpha-1}.
\end{array}
$$
Since $\alpha\le 1$ by the Poincar\'e inequality we have
$$
|\zeta(t)|_{2\alpha}\le c|\zeta(t)|_{\alpha+1}
$$
and consequently
$$
\frac12\,\frac{d}{dt}\,|\zeta(t)|_\alpha^2\ge -c|p_\epsilon'(X(t,x))|_\infty^2 |\zeta(t)|^2-|F(t)|^2_{\alpha-1}.
$$
Integrating from $0$ and $T$, yields
\begin{equation}
\label{e2.13e}
|\zeta(0)|^2_\alpha\le c|p_\epsilon'(X_\epsilon(t,x))|_\infty^2 \int_0^T|\zeta(t)|^2dt+\int_0^T|F(t)|^2_{\alpha-1}\,dt.
\end{equation}
Now by \eqref{e2.12e} we deduce
\begin{equation}
\label{e2.14e}
|\zeta(t)|^2\le  \int_t^T |F(s)|^2_{-1}ds\le C(T)\int_0^T |F(s)|^2_{\alpha-1}ds.
\end{equation}
Substituting in \eqref{e2.13e}, yields
$$
|\zeta(0)|^2_\alpha\le C(T)\,\Delta_{T,\epsilon}(X)\int_0^T|F(s)|^2_{\alpha-1}ds,
$$
as required.
 \end{proof}

\begin{Lemma}
\label{l2.3e}
\begin{equation}
\label{e2.15e}
|\eta_\epsilon^h(t,x)|_{-\alpha}\le  C(T)\,\Delta_{T,\epsilon}(X) |h|^2_{-\alpha},\quad\forall\;h\in H^{-\alpha}(\mathcal O).
\end{equation}
\end{Lemma}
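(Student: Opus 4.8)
The plan is to deduce Lemma \ref{l2.3e} from the variation--of--constants representation of \eqref{e5b} combined with Lemma \ref{l2.2e}, rather than from a direct energy estimate in the norm $|\cdot|_{-\alpha}$. Indeed, testing \eqref{e5b} with $(-A)^{-\alpha}\eta_\epsilon^h$ would produce the term $\langle p_\epsilon'(X_\epsilon)\eta_\epsilon^h,(-A)^{-\alpha}\eta_\epsilon^h\rangle$, which for $\alpha>0$ has no sign, so that Gronwall's lemma would only yield a bound exponential in $\Delta_{T,\epsilon}(X)$. Instead I would start from
\[
\eta_\epsilon^h(t,x)=e^{tA}h-\int_0^t e^{(t-s)A}\bigl(p_\epsilon'(X_\epsilon(s,x))\,\eta_\epsilon^h(s,x)\bigr)\,ds,
\]
and, using that $e^{tA}$ is a contraction on $H$ commuting with $(-A)^{-\alpha/2}$ and that $(-A)^{-\alpha/2}$ is bounded on $H$ (since $\alpha_1>0$), take the $|\cdot|_{-\alpha}$ norm to obtain
\[
|\eta_\epsilon^h(t,x)|_{-\alpha}\le |h|_{-\alpha}+\|(-A)^{-\alpha/2}\|_{L(H)}\int_0^t |p_\epsilon'(X_\epsilon(s,x))\,\eta_\epsilon^h(s,x)|\,ds.
\]

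The next step is to estimate the integrand. By the definition \eqref{e2.1f} of $\Delta_{T,\epsilon}(X)$ and the Poincar\'e inequality (recall $1-\alpha>0$),
\[
|p_\epsilon'(X_\epsilon(s,x))\,\eta_\epsilon^h(s,x)|\le |p_\epsilon'(X_\epsilon(s,x))|_\infty\,|\eta_\epsilon^h(s,x)|\le c\,\Delta_{T,\epsilon}(X)^{1/2}\,|\eta_\epsilon^h(s,x)|_{1-\alpha},\quad s\in[0,T].
\]
Hence, by the Cauchy--Schwarz inequality in $s$ and then Lemma \ref{l2.2e},
\[
\int_0^t |p_\epsilon'(X_\epsilon(s,x))\,\eta_\epsilon^h(s,x)|\,ds\le c\sqrt{T}\,\Delta_{T,\epsilon}(X)^{1/2}\Bigl(\int_0^T|\eta_\epsilon^h(s,x)|_{1-\alpha}^2\,ds\Bigr)^{1/2}\le C(T)\,\Delta_{T,\epsilon}(X)\,|h|_{-\alpha}.
\]

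Combining the two displays gives $|\eta_\epsilon^h(t,x)|_{-\alpha}\le \bigl(1+C(T)\Delta_{T,\epsilon}(X)\bigr)|h|_{-\alpha}$, and since $\Delta_{T,\epsilon}(X)\ge 1$ this is precisely \eqref{e2.15e} (with $|h|_{-\alpha}$ on the right--hand side). The one point that needs care — and the reason for routing the argument through Lemma \ref{l2.2e} — is that the product $p_\epsilon'(X_\epsilon(s,x))\,\eta_\epsilon^h(s,x)$ carries no spatial regularity beyond $L^2$, because $p_\epsilon'(X_\epsilon)$ is merely bounded and not differentiable; thus it cannot be controlled in a negative--order Sobolev norm in terms of $|\eta_\epsilon^h|_{-\alpha}$ directly. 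Passing the time integral through the $L^2(0,T;H^{1-\alpha})$ bound of Lemma \ref{l2.2e}, where the dissipativity $p_\epsilon'\le 0$ has already been exploited, is exactly what produces \eqref{e2.15e} with only a linear dependence on $\Delta_{T,\epsilon}(X)$.
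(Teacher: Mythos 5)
Your argument is correct, but it takes a genuinely different route from the paper's. The paper proves \eqref{e2.15e} by duality: the adjoint of $h\mapsto\eta_\epsilon^h(t,x)$ sends $k$ to $\zeta(0)$, where $\zeta$ solves the backward linearized equation \eqref{e2.16e} with terminal datum $k$; the bound then follows from an energy estimate for $\zeta$ in the positive-order norm $|\cdot|_\alpha$, fed by the elementary contraction $|\zeta(s)|\le|k|$ (which is where $p_\epsilon'\le 0$ enters). You instead work forward in time with the mild formulation and bootstrap from the $L^2(0,T;H^{1-\alpha})$ bound of Lemma \ref{l2.2e}; this is a valid alternative, and your diagnosis that a direct Gronwall argument in $|\cdot|_{-\alpha}$ would only give a bound exponential in $\Delta_{T,\epsilon}(X)$ is accurate --- both proofs are built around that obstruction, yours by routing through Lemma \ref{l2.2e}, the paper's by routing through the $L^2$-contraction for the backward problem. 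Two caveats. First, \eqref{e2.15e} as printed is not homogeneous in $h$; comparison with its use in \eqref{e2.21e}--\eqref{e2.22e} shows the intended statement is $|\eta_\epsilon^h(t,x)|^2_{-\alpha}\le C(T)\,\Delta_{T,\epsilon}(X)\,|h|^2_{-\alpha}$, i.e.\ the square is missing on the left rather than spurious on the right. Second, and relatedly, what you actually prove is $|\eta_\epsilon^h(t,x)|_{-\alpha}\le C(T)\,\Delta_{T,\epsilon}(X)\,|h|_{-\alpha}$ --- you collect one factor $\Delta_{T,\epsilon}(X)^{1/2}$ from $|p_\epsilon'(X_\epsilon)|_\infty$ and another from Lemma \ref{l2.2e} --- which is weaker than the paper's bound by a factor $\Delta_{T,\epsilon}(X)^{1/2}$. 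This loss is harmless downstream, since in Corollary \ref{c2.3e} and Lemma \ref{l2.4e} it only raises the power of $\Delta_{T,\epsilon}(X)$ whose expectation must be controlled, and all polynomial moments are finite by \eqref{e1.4g} and \cite[Theorem 4.8(iii)]{Da04}; but you should state the inequality you actually obtain rather than assert that it ``is precisely \eqref{e2.15e}''.
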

\begin{proof}
Fix $t\in[0,T]$  and introduce a mapping
$$
V_t:H\to H,\quad h\to \eta_\epsilon^h(t,x),
$$
(where $x\in H$ and $\omega\in\Omega$) are fixed.
Then $V_t^*(h)= G(t,0)k$ and so $V_t^*(k)=\zeta(0)$ where $\zeta$ is the solution to
\begin{equation}
\label{e2.16e}
\left\{\begin{array}{l}
\zeta'(t)=-A\zeta(t)-p_\epsilon'(X(t,x))\zeta(t) ,\\
\\
\zeta(t)=k.
\end{array}\right. 
\end{equation}
We have clearly
\begin{equation}
\label{e2.17e}
|\zeta(s)|^2\le \,|k|^2,\quad \forall \;s\in[0,t].
\end{equation}
To prove \eqref{e2.15e}, arguing by duality, it is enough to prove that
\begin{equation}
\label{e2.18e}
|\zeta(t)|^2_{\alpha}\le C(T)\,\Delta_{T,\epsilon}(X)\,|h|^2_{\alpha}.
\end{equation}
 Write
\begin{equation}
\label{e2.19e}
\begin{array}{lll}
\ds\frac12\,\frac{d}{dt}\,|\zeta(t)|_\alpha^2&=&\ds |\zeta(t)|_{\alpha+1}^2-\int_\mathcal Op_\epsilon'(X_\epsilon(t,x))\,\zeta(t)\,(-A)^\alpha\zeta(t)\,dx\\
\\
&\ge&\ds  |\zeta(t)|_{\alpha+1}^2-|p'(X_\epsilon(t,x))|_\infty|\zeta(t)|\,|\zeta(t)|_{-2\alpha} \\
\\
&\ge&\ds -c|p'(X_\epsilon(t,x))|_\infty |\zeta(t)|^2.
\end{array} 
\end{equation}
Therefore, thanks to \eqref{e2.17e},
$$
\begin{array}{lll}
\ds|\zeta(t)|_\alpha^2&\le &\ds |k|_\alpha^2+\int_0^t |p_\epsilon'(X_\epsilon(s,x))|^2_\infty\,|\zeta(s)|^2ds\\
\\
&\le&\ds |k|_\alpha^2+\sup_{t\in[0,T]}|p_\epsilon'(X_\epsilon(s,x))|^2_\infty\, \,t|k| ^2\\
\\
&\le&\ds C(T)\,\Delta_{T,\epsilon}(X)\,)|k|_\alpha^2 
\end{array} 
$$
as required.

\end{proof}
\begin{Corollary}
\label{c2.3e}
Let $\delta\in(0,1-\alpha)$, then
\begin{equation}
\label{e2.20e}
\int_0^T|\eta_\epsilon^h(t,x)|^2_{1-\alpha-\delta}\,dt\le  C(T) T^{\delta}\,\Delta_{T,\epsilon}(X) \, |h|^{2}_{-\alpha}.
\end{equation}

\end{Corollary}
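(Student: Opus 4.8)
The plan is to interpolate between the two estimates just established. Lemma~\ref{l2.2e} controls $\int_0^T|\eta_\epsilon^h(t,x)|_{1-\alpha}^2\,dt$ by $C(T)\Delta_{T,\epsilon}(X)|h|_{-\alpha}^2$, while Lemma~\ref{l2.3e} controls the pointwise quantity $|\eta_\epsilon^h(t,x)|_{-\alpha}$ (hence $\sup_{t\in[0,T]}|\eta_\epsilon^h(t,x)|_{-\alpha}^2$, up to the $C(T)\Delta_{T,\epsilon}(X)$ factor) by $|h|_{-\alpha}^2$. Applying the interpolatory estimate \eqref{e1.10} with $a=-\alpha$, $b=1-\alpha-\delta$, $c=1-\alpha$ gives, for each fixed $t$,
$$
|\eta_\epsilon^h(t,x)|_{1-\alpha-\delta}\le |\eta_\epsilon^h(t,x)|_{-\alpha}^{\frac{\delta}{1}}\,|\eta_\epsilon^h(t,x)|_{1-\alpha}^{\frac{1-\delta}{1}},
$$
since $c-a=1$, $c-b=\delta$, $b-a=1-\delta$. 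Squaring and integrating in $t$ over $[0,T]$:
$$
\int_0^T|\eta_\epsilon^h(t,x)|_{1-\alpha-\delta}^2\,dt\le \Big(\sup_{t\in[0,T]}|\eta_\epsilon^h(t,x)|_{-\alpha}^{2}\Big)^{\delta}\int_0^T|\eta_\epsilon^h(t,x)|_{1-\alpha}^{2(1-\delta)}\,dt.
$$

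For the remaining integral I would apply H\"older's inequality in $t$ with exponents $\frac{1}{1-\delta}$ and $\frac{1}{\delta}$, which produces
$$
\int_0^T|\eta_\epsilon^h(t,x)|_{1-\alpha}^{2(1-\delta)}\,dt\le T^{\delta}\left(\int_0^T|\eta_\epsilon^h(t,x)|_{1-\alpha}^{2}\,dt\right)^{1-\delta}.
$$
Now feed in Lemma~\ref{l2.3e} for the supremum factor and Lemma~\ref{l2.2e} for the integral factor. The supremum contributes $\big(C(T)\Delta_{T,\epsilon}(X)|h|_{-\alpha}^2\big)^{\delta}$ and the integral contributes $\big(C(T)\Delta_{T,\epsilon}(X)|h|_{-\alpha}^2\big)^{1-\delta}$; multiplying, the powers of $C(T)$, $\Delta_{T,\epsilon}(X)$ and $|h|_{-\alpha}^2$ each recombine to exponent one (after absorbing constants and the $T$-dependence of $C(T)$ into a single $C(T)$), leaving exactly the claimed bound $C(T)T^{\delta}\Delta_{T,\epsilon}(X)|h|_{-\alpha}^2$.

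There is no real obstacle here: the statement is a routine interpolation consequence of the two preceding lemmas, and the only point requiring a little care is bookkeeping the H\"older exponents so that the factor $T^{\delta}$ (and not some other power of $T$) appears, and checking that $\delta\in(0,1-\alpha)$ indeed makes $b=1-\alpha-\delta$ lie strictly between $a=-\alpha$ and $c=1-\alpha$ so that \eqref{e1.10} applies. One should also note that the constant $C(T)$ may change from line to line but remains of the same nature (a nondecreasing function of $T$), and that the estimate is uniform in $\epsilon$, $x$ and $\omega$, which is what will be needed in Section~3.
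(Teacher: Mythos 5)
Your argument is correct and coincides with the paper's own proof: the same interpolation \eqref{e1.10} with $a=-\alpha$, $b=1-\alpha-\delta$, $c=1-\alpha$, the same H\"older step in $t$ with exponents $\tfrac1{1-\delta},\tfrac1\delta$ producing the factor $T^\delta$, and the same use of Lemmas \ref{l2.3e} and \ref{l2.2e} for the two factors. The only cosmetic difference is that you pull out $\sup_{t\in[0,T]}|\eta_\epsilon^h(t,x)|_{-\alpha}^{2\delta}$ before invoking Lemma \ref{l2.3e}, whereas the paper applies \eqref{e2.15e} pointwise in $t$; the substance is identical.
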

\begin{proof}
By \eqref{e1.10} with $a=-\alpha,\;b=1-\alpha-\delta,\;c=1-\alpha$, we have
\begin{equation}
\label{e2.21e}
\int_0^T|\eta_\epsilon^h(t,x)|^2_{1-\alpha-\delta}\,dt\le \int_0^T|\eta_\epsilon^h(t,x)|^{2 \delta}_{-\alpha}\,|\eta_\epsilon^h(t,x)|^{2(1-\delta)}_{1-\alpha}\,dt.
\end{equation}
Now, taking into account  \eqref{e2.15e}, we have
\begin{equation}
\label{e2.22e}
\begin{array}{l}
\ds\int_0^T|\eta_\epsilon^h(t,x)|^2_{1-\alpha-\delta}\,dt\le  C(T)^{\delta}\,(\Delta_{T,\epsilon}(X ))^{\delta} |h|^{2 \delta}_{-\alpha}\,\int_0^T |\eta_\epsilon^h(t,x)|^{2(1-\delta)}_{1-\alpha}\,dt.
\end{array}
\end{equation}
By H\"older's inequality with exponents $\tfrac1{1-\delta},\,\tfrac1\delta$, we deduce  
\begin{equation}
\label{e2.23e}
\begin{array}{l}
\ds\int_0^T|\eta_\epsilon^h(t,x)|^2_{1-\alpha-\delta}\,dt\le  C(T)^{\delta}(\Delta_{T,\epsilon}(X))^{\delta} |h|^{2\delta}_{-\alpha} T^{\delta}\left(\int_0^T |\eta_\epsilon^h(t,x)|^{2}_{1-\alpha}\,dt\right)^{1-\delta}.
\end{array}
\end{equation}
Finally, taking into account \eqref{e2.7e}, yields
  \begin{equation}
\label{e2.24e}
\int_0^T|\eta_\epsilon^h(t,x)|^2_{1-\alpha-\delta}\,dt\le C(T)\left(\sup_{t\in[0,T]}|p_\epsilon'(X_\epsilon(t,x))|_\infty^2+1\right) |h|^2_{-\alpha},
\end{equation}
and \eqref{e2.20e} follows.

\end{proof}

Now we state an  estimate for $DP^\epsilon_t\varphi(x)$.  Fix $\delta\in (0,1-\gamma)$ and set $\alpha=1-\gamma-\delta$.
\begin{Lemma}
\label{l2.4e}
Let $p>1$,  $\delta\in (0,1-\gamma)$ and set $\alpha=1-\gamma-\delta$. Then we have
\begin{equation}
\label{e2.25e}
  |\langle DP^\epsilon_t\varphi(x),h   \rangle|\le  C(t)t^{\frac\delta2-1}\,\left[ \E\left( \varphi^p(X_\epsilon(t,x))\right)   \right] ^{1/p}
  \,\left[ \E\left(\Delta_{T,\epsilon}(X))^{p'/2}\right)   \right] ^{1/p'}\;|h|_{-\alpha},
\end{equation}
with $\frac1{p}+\frac1{p'}=1$.
\end{Lemma}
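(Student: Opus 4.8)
The plan is to feed the Bismut--Elworthy--Li formula \eqref{e4.10} into three successive inequalities: Hölder in $\Omega$, the Burkholder--Davis--Gundy inequality for the scalar stochastic integral, and finally Corollary \ref{c2.3e} applied with the specific index $\alpha=1-\gamma-\delta$, chosen precisely so that $1-\alpha-\delta=\gamma$.

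First I would take absolute values in \eqref{e4.10} and apply Hölder's inequality on $(\Omega,\P)$ with exponents $p$ and $p'$; this bounds $|\langle DP^\epsilon_t\varphi(x),h\rangle|$ by $\frac1t\,[\E(|\varphi(X_\epsilon(t,x))|^p)]^{1/p}$ times $[\E|\int_0^t\langle(-A)^{\gamma/2}\eta^h_\epsilon(s,x),dW(s)\rangle|^{p'}]^{1/p'}$ (one may assume $\varphi\ge0$, otherwise replace $\varphi$ by $|\varphi|$). Next I would handle the stochastic factor: the real-valued process $s\mapsto\int_0^s\langle(-A)^{\gamma/2}\eta^h_\epsilon(r,x),dW(r)\rangle$ is a continuous martingale whose quadratic variation is $\int_0^s|(-A)^{\gamma/2}\eta^h_\epsilon(r,x)|^2\,dr=\int_0^s|\eta^h_\epsilon(r,x)|^2_\gamma\,dr$, so Burkholder--Davis--Gundy gives a constant $c_{p'}$ with $\E|\int_0^t\langle(-A)^{\gamma/2}\eta^h_\epsilon(s,x),dW(s)\rangle|^{p'}\le c_{p'}\,\E\big(\int_0^t|\eta^h_\epsilon(s,x)|^2_\gamma\,ds\big)^{p'/2}$.

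The core of the argument is then to control $\int_0^t|\eta^h_\epsilon(s,x)|^2_\gamma\,ds$ via Corollary \ref{c2.3e}. With $\alpha=1-\gamma-\delta$ one has $1-\alpha-\delta=\gamma$, and the constraints $\alpha\in(0,1)$, $0<\delta<1-\alpha$ needed by Corollary \ref{c2.3e} hold for this choice of indices (its parameter $\delta$ coinciding with ours); moreover, for each fixed $\omega$ equation \eqref{e5b} is a deterministic linear problem, so the Corollary applies pathwise on $[0,t]$ and yields $\int_0^t|\eta^h_\epsilon(s,x)|^2_\gamma\,ds\le C(t)\,t^{\delta}\,\Delta_{t,\epsilon}(X)\,|h|^2_{-\alpha}\le C(t)\,t^{\delta}\,\Delta_{T,\epsilon}(X)\,|h|^2_{-\alpha}$, using $\Delta_{t,\epsilon}(X)\le\Delta_{T,\epsilon}(X)$ for $t\le T$. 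Raising this to the power $p'/2$ and taking expectations — the deterministic factor $C(t)t^{\delta}|h|^2_{-\alpha}$ comes out of the expectation, leaving $\E(\Delta_{T,\epsilon}(X)^{p'/2})$ — and then recombining with the first two steps, the powers of $t$ collect as $\frac1t(C(t)t^{\delta})^{1/2}=C(t)^{1/2}t^{\delta/2-1}$, which after absorbing constants is exactly \eqref{e2.25e}.

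I expect the main (and really the only) obstacle to be the bookkeeping in the last step. The index $\alpha=1-\gamma-\delta$ is forced by the requirement that the Sobolev norm $|\cdot|_{1-\alpha-\delta}$ delivered by Corollary \ref{c2.3e} match the norm $|\cdot|_\gamma=|(-A)^{\gamma/2}\cdot|$ imposed by the operator $(-A)^{\gamma/2}$ inside the BEL formula; and one must carefully isolate the deterministic part of the Corollary \ref{c2.3e} estimate before taking the $L^{p'/2}(\Omega)$ norm, so that $\Delta_{T,\epsilon}$ enters only through $\E(\Delta_{T,\epsilon}(X)^{p'/2})$. The Hölder and Burkholder--Davis--Gundy steps, and the verification of the interpolation exponents (already carried out inside Corollary \ref{c2.3e}), are entirely routine.
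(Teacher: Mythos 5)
Your proposal is correct and follows essentially the same route as the paper: Bismut--Elworthy--Li \eqref{e4.10}, H\"older on $\Omega$ with exponents $p,p'$, Burkholder for the scalar martingale, and then Corollary \ref{c2.3e} with $\alpha=1-\gamma-\delta$ so that $1-\alpha-\delta=\gamma$, collecting the powers of $t$ into $t^{\delta/2-1}$. Your write-up is in fact more careful than the paper's (explicit quadratic variation, pathwise application of the corollary, and the separation of the deterministic factor before taking $\E(\cdot)^{p'/2}$), so there is nothing to add.
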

\begin{proof}
By \eqref{e4.10} and H\"older's inequality we have
\begin{equation}
\label{e2.26e}
| \langle DP^\epsilon _t\varphi(x),h  \rangle|\le \frac1t\;\left[\E\left(\varphi^p(X_\epsilon (t,x))\right)\right]^{1/p}\; \left[\E\left(\int_0^t\langle 
(-A)^{\frac{\gamma}{2}}\eta_\epsilon^h(s,x),dW(s) \rangle  \right)^{p'}\right]^{1/{p'}},\quad h\in H, 
\end{equation}
Thanks to  Burkholder's inequality  and recalling that $\gamma=1-\alpha-\delta$ it follows that
\begin{equation}
\label{e2.27e}
\begin{array}{lll}
| \langle DP^\epsilon _t\varphi(x),h  \rangle|&\le &\ds\frac3t\;[\E\left(\varphi^p(X_\epsilon (t,x))\right)]^{1/p}\\
\\
&\times&\ds \left\{\E\left[\left(\int_0^t 
|\eta_\epsilon^h(s,x)|^2_{1-\alpha-\delta}\,ds   \right)^{p'/2}\right]\right\}^{1/p'},\quad h\in H. 
\end{array}
\end{equation}
Now \eqref{e2.26e} follows from   Corollary \ref{c2.3e}.

\end{proof}

\section{Integral estimates of $DP^\epsilon_t$}

\begin{Lemma}
\label{l3.1e}
Let $p>1$, $q>\tfrac{p}{p-1}$,  $\delta\in (0,1-\gamma)$ and set $\alpha=1-\gamma-\delta$. Then there exists $C>0$ such that
\begin{equation}
\label{e3.1e}
 \int_H |\langle DP^\epsilon_t\varphi(x),h(x)   \rangle|\nu(dx) \le  Ct^{\frac\delta2-1}\,\,\|\varphi\|_{L^p(H,\nu)}\,\||h|_{-\alpha}\|_{L^q(H,\nu)},\quad\forall\;h\in L^q(H,\nu),
 \varphi\in C^1_b(H).
\end{equation}
\end{Lemma}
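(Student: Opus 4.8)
The plan is to apply the pointwise gradient estimate of Lemma~\ref{l2.4e} with the fixed direction replaced by the value $h(x)$, and then to integrate in $x$ against $\nu$, splitting the integrand by H\"older's inequality. First, for a fixed $t\in(0,T]$ and $\varphi\in C^1_b(H)$, Lemma~\ref{l2.4e} applied for $\nu$-a.e.\ $x$ with the $H^{-\alpha}(\mathcal O)$-valued direction $h(x)$ (legitimate since $\alpha=1-\gamma-\delta\in(0,1)$) gives
\[
|\langle DP^\epsilon_t\varphi(x),h(x)\rangle|\le C(T)\,t^{\frac\delta2-1}\,\bigl[P^\epsilon_t(|\varphi|^p)(x)\bigr]^{1/p}\,G_\epsilon(x)\,|h(x)|_{-\alpha},
\]
where $G_\epsilon(x):=\bigl[\E\bigl(\Delta_{T,\epsilon}(X(\cdot,x))\bigr)^{p'/2}\bigr]^{1/p'}$ and I have used $\sup_{t\le T}C(t)\le C(T)$ to make the prefactor independent of $t\in(0,T]$.

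Next, integrate this inequality over $H$ with respect to $\nu$ and apply H\"older's inequality with the three exponents $p$, $r$, $q$, where $\tfrac1p+\tfrac1r+\tfrac1q=1$; such an exponent $r>1$ exists \emph{precisely} because $q>\tfrac{p}{p-1}$, which forces $\tfrac1p+\tfrac1q<1$. This yields
\[
\int_H|\langle DP^\epsilon_t\varphi(x),h(x)\rangle|\,\nu(dx)\le C(T)\,t^{\frac\delta2-1}\Bigl(\int_H P^\epsilon_t(|\varphi|^p)\,d\nu\Bigr)^{1/p}\Bigl(\int_H G_\epsilon^{\,r}\,d\nu\Bigr)^{1/r}\bigl\||h|_{-\alpha}\bigr\|_{L^q(H,\nu)}.
\]
For the first factor I would invoke the $L^p(H,\nu)$-regularity of the approximating semigroups, namely $\int_H P^\epsilon_t(|\varphi|^p)\,d\nu\le C_0\,\|\varphi\|^p_{L^p(H,\nu)}$ uniformly in $\epsilon\in(0,1]$ and $t\in(0,T]$; this is the point where one uses that $\nu$, although invariant only for $P_t$ and not for $P^\epsilon_t$, still controls the approximate dynamics, and it is available from \cite{DaDe15}.

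It remains to bound $\int_H G_\epsilon^{\,r}\,d\nu$ uniformly in $\epsilon\le1$ and $t\le T$. Since $p$ is decreasing one has $|p'_\epsilon(r)|\le|p'(J_\epsilon(r))|\le c\,(1+|r|^{N-1})$, whence
\[
\Delta_{T,\epsilon}(X(\cdot,x))\le c\Bigl(1+\sup_{s\le T}\|X_\epsilon(s,x)\|^{2(N-1)}_{L^\infty(\mathcal O)}\Bigr),
\]
and it suffices to combine the uniform-in-$\epsilon$ moment bounds $\sup_{\epsilon\le1}\E\sup_{s\le T}\|X_\epsilon(s,x)\|^{m}_{L^\infty}\le c_m(1+|x|^{k_m})$ with the finiteness \eqref{e1.4g} of all moments of $\nu$. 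These $L^\infty(\mathcal O)$-estimates are exactly where the standing assumption $\tfrac n2-1<\gamma<1$ enters: it makes the stochastic convolution a.s.\ continuous on $[0,T]\times\overline{\mathcal O}$ with moments of every order, after which the dissipativity of $p$ controls the nonlinear remainder by a standard energy argument. Collecting $C:=C(T)\,C_0^{1/p}\bigl(\sup_{\epsilon}\int_H G_\epsilon^{\,r}\,d\nu\bigr)^{1/r}$ completes the proof. I expect the routine part to be the first two steps; the genuine work — and the main potential obstacle — lies in the pair of uniform inputs of the last step, above all the $L^p(H,\nu)$-bound for $P^\epsilon_t$, since $\nu$ is not its invariant measure.
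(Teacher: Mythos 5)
Your proof follows essentially the same route as the paper's: the pointwise bound of Lemma~\ref{l2.4e} applied in the direction $h(x)$, a triple H\"older inequality with exponents $p,q,r$ (possible exactly because $q>\tfrac{p}{p-1}$), the polynomial growth of $p'_\epsilon$ combined with the $L^\infty(\mathcal O)$-moment bounds for $X_\epsilon$ from \cite[Theorem 4.8(iii)]{Da04}, and the finiteness \eqref{e1.4g} of the moments of $\nu$. The only divergence is in the first H\"older factor: the paper simply writes $\int_H P_t(\varphi^p)\,d\nu$ and invokes invariance (and in fact applies the lemma downstream with $\nu_\epsilon$, for which $P^\epsilon_t$ is exactly invariant), whereas you correctly flag that $\nu$ is not invariant for $P^\epsilon_t$ and would need a uniform-in-$\epsilon$ bound there --- a legitimate subtlety that the paper glosses over rather than a gap in your argument.
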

\begin{proof}
By \eqref{e2.25e} we deduce
\begin{equation}
\label{e3.2e}
  |\langle DP^\epsilon_t\varphi(x),h(x)   \rangle|\le  C(t)t^{\frac\delta2-1}\,\left[ \E\left( \varphi^p(X_\epsilon(t,x))\right)   \right] ^{1/p}
  \,\left[ \E\left(\Delta_{T,\epsilon}(X)^{p'/2}\right)   \right] ^{1/p'}\;|h(x)|_{-\alpha}.
\end{equation}
Now we integrate \eqref{e3.2e} with respect to $\nu$ over $H$ and use a triple H\"older's inequality with exponents
$p,\;q, r$ such that $\tfrac1{p}+\tfrac1{q}+\tfrac1{r}=1$.
\begin{equation}
\label{e3.3e}
\begin{array}{l}
\ds \int_H\left|\langle DP^\epsilon_t\varphi(x),h(x)   \rangle\right|\,\nu(dx)\le  C(t)t^{\frac\delta2-1}\,\left(\int_H P_t(\varphi^p)\,d\nu  \right) ^{1/p}\\
\\
\ds
  \times\,\left(\int_H\left\{\E\left[(\Delta_{T,\epsilon}(X)^{p'/2}\right]\right\}^{\frac{r}{p'}}\,d\nu\right)^{\frac{1}{r}}\;\left(\int_H|h(x)|^q_{-\alpha}\,d\nu\right)^\frac1q.
\end{array} 
\end{equation}
But, recalling that $p$ is a polynomial of degree $N$, we see that  there exists $C_1>0$ such that
$$
(\Delta_{T,\epsilon}(X)^{p'/2}\le C_1\sup_{t\in[0,T]}|X_\epsilon(t,x))|^{\frac{p'(N-1)}{2}}_\infty+C_1.
$$
Now by \cite[Theorem 4.8(iii)]{Da04}, 
there is $C_2>0$ such that
$$
\E\left[(\Delta_{T,\epsilon}(X)^{p'/2}\right]
\le C_2 |x|^{\frac{p'(N-1)}{2}} +C_2
$$
and so there is $C_3>0$ such that
\begin{equation}
\label{e3.4e}
\left\{\E\left[(\Delta_{T,\epsilon}(X)^{p'/2}\right]\right\}^{\frac{r}{p'}}\
\le C_3 |x|^{\frac{r(N-1)}{2}} +C_3.
\end{equation}
Consequently, thanks to \eqref{e1.4g} we have
\begin{equation}
\label{e3.4ee}
\int_H\left\{\E\left[(\Delta_{T,\epsilon}(X)^{p'/2}\right]\right\}^{\frac{r}{p'}}\,d\nu=C_4<\infty.
\end{equation}
Substituting \eqref{e3.4ee} into \eqref{e3.3e} and taking into account the invariance of $\nu$ yields the conclusion.

\end{proof}
We are now ready to show the main result of the paper.  
  \begin{Theorem}
  \label{t2}
   Let  $\delta\in (0,1-\gamma),\;p\in (1,\infty)$.Then
   there exists  $C_p>0$ such that  for all       $\varphi\in C_b^1(H,\nu)$   we have
 \begin{equation}
\label{e3.5e}
\int _H \langle D\varphi(x),h\rangle  \, \nu (dx) \le C\|\varphi\|_{L^p(H,\nu)} \,|h|_{1+\delta+\gamma},\quad \forall\;h\in H^{1+\delta+\gamma}(\mathcal O).
\end{equation}
 \end{Theorem}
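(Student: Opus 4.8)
The plan is to integrate the commutation identity \eqref{e7} against $\nu$, bound the two resulting terms by Lemmas \ref{l2.4e} and \ref{l3.1e} together with the moment information on $\nu$, and then let $\epsilon\to 0$. First I would reduce to $h\in D(A)$ and $\varphi\in C^1_b(H)$: the left--hand side of \eqref{e3.5e} is continuous in $h$ for the norm $|\cdot|\le c\,|\cdot|_{1+\delta+\gamma}$ (since $\varphi\in C^1_b(H)$), $D(A)$ is dense in $H^{1+\delta+\gamma}(\mathcal O)$, and \eqref{e7} is available for such $h$ (extended to $\nu$--a.e.\ $x$ by continuity in $x$). Fixing $t=1$ and integrating \eqref{e7} over $H$ against $\nu$ yields
$$
\int_H P^\epsilon_1\big(\langle D\varphi,h\rangle\big)\,d\nu=\int_H\langle DP^\epsilon_1\varphi,h\rangle\,d\nu-\int_0^1\!\!\int_H P^\epsilon_{1-s}\big(\langle Ah+p'_\epsilon h,DP^\epsilon_s\varphi\rangle\big)\,d\nu\,ds .
$$
Because $X_\epsilon(1,x)\to X(1,x)$ in $H$, the left integrand converges to $P_1(\langle D\varphi,h\rangle)(x)$ and is dominated by $\|D\varphi\|_\infty\,|h|$, so by dominated convergence and invariance of $\nu$ for $P_t$ the left side tends to $\int_H\langle D\varphi,h\rangle\,d\nu$. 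It thus suffices to bound the right side by $C\,\|\varphi\|_{L^p(H,\nu)}\,|h|_{1+\delta+\gamma}$ uniformly in $\epsilon$.

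For the first term I would apply Lemma \ref{l3.1e} with the constant vector field $x\mapsto h$ and any $q>p/(p-1)$; since $\nu$ is a probability measure $\||h|_{-\alpha}\|_{L^q(H,\nu)}=|h|_{-\alpha}$, and with $\alpha=1-\gamma-\delta\in(0,1)$ one gets
$$
\Big|\int_H\langle DP^\epsilon_1\varphi,h\rangle\,d\nu\Big|\le C\,\|\varphi\|_{L^p(H,\nu)}\,|h|_{-\alpha}\le C'\,\|\varphi\|_{L^p(H,\nu)}\,|h|_{1+\delta+\gamma},
$$
using $|h|_{-\alpha}\le c\,|h|_0\le c'\,|h|_{1+\delta+\gamma}$.

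The second term is the heart of the matter. I would apply the pointwise Bismut--Elworthy--Li estimate \eqref{e2.25e} with $k=Ah+p'_\epsilon(y)h$ in place of $h$, which produces the factor $C(s)s^{\frac\delta2-1}\,[P^\epsilon_s(\varphi^p)(y)]^{1/p}\,[\E\,\Delta_{1,\epsilon}(X^{y})^{p'/2}]^{1/p'}$ times $|Ah+p'_\epsilon(y)h|_{-\alpha}$. Here $|Ah+p'_\epsilon(y)h|_{-\alpha}\le|Ah|_{-\alpha}+|p'_\epsilon(y)h|_{-\alpha}=|h|_{1+\delta+\gamma}+|p'_\epsilon(y)h|_{-\alpha}$, the first summand being precisely the norm appearing in the theorem (this is what dictates the exponent $2-\alpha=1+\gamma+\delta$); the term $|p'_\epsilon(y)h|_{-\alpha}$ is controlled by the Sobolev duality \eqref{e1.12}, H\"older's inequality, the embedding $H^{1+\delta+\gamma}(\mathcal O)\subset L^\infty(\mathcal O)$ (which holds since $1+\gamma+\delta>n/2$, by $\gamma>\tfrac n2-1$), and the growth $|p'_\epsilon(r)|\le c(1+|r|^{N-1})$, so that $|p'_\epsilon(y)h|_{-\alpha}\le c\,(1+|y|^{N-1}_\infty)\,|h|_{1+\delta+\gamma}$. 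Next, $|\int_H P^\epsilon_{1-s}(\langle Ah+p'_\epsilon h,DP^\epsilon_s\varphi\rangle)\,d\nu|\le \int_H P^\epsilon_{1-s}(|\langle\cdots\rangle|)\,d\nu$; inserting the above pointwise bound, using H\"older with exponents $p,p'$ inside the inner expectation together with the semigroup identity $P^\epsilon_{1-s}P^\epsilon_s=P^\epsilon_1$, and integrating over $H$ with one further H\"older, leads to
$$
\Big|\int_H P^\epsilon_{1-s}\big(\langle Ah+p'_\epsilon h,DP^\epsilon_s\varphi\rangle\big)\,d\nu\Big|\le C(s)s^{\frac\delta2-1}\Big(\int_H P^\epsilon_1(\varphi^p)\,d\nu\Big)^{1/p}\Big(\int_H P^\epsilon_{1-s}(G_\epsilon^{p'})\,d\nu\Big)^{1/p'},
$$
with $G_\epsilon(y)=[\E\,\Delta_{1,\epsilon}(X^{y})^{p'/2}]^{1/p'}\,|Ah+p'_\epsilon(y)h|_{-\alpha}$. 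The first factor converges to $\|\varphi\|^p_{L^p(H,\nu)}$ as $\epsilon\to0$, and the second is $\le C\,|h|^{p'}_{1+\delta+\gamma}$ uniformly in $\epsilon$ and $s\in[0,1]$, by the same reasoning as in Lemma \ref{l3.1e}: one bounds $\Delta_{1,\epsilon}(X^{y})^{p'/2}$ and the powers of $p'_\epsilon$ by polynomials in $\sup_{[0,1]}|X_\epsilon|_\infty$, applies $P^\epsilon_{1-s}$ via the moment estimates of \cite[Theorem 4.8(iii)]{Da04}, and integrates using \eqref{e1.4g}. Since $\int_0^1 C(s)s^{\frac\delta2-1}\,ds<\infty$ (as $\tfrac\delta2-1>-1$), integrating in $s$ gives the required bound $C\,\|\varphi\|_{L^p(H,\nu)}\,|h|_{1+\delta+\gamma}$ for the second term.

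Putting the two estimates together and letting $\epsilon\to0$ yields \eqref{e3.5e} for $h\in D(A)$, and density of $D(A)$ in $H^{1+\delta+\gamma}(\mathcal O)$ completes the proof. I expect the main obstacle to be exactly this second term: one must combine the time--singular Bismut formula with the ``rough'' direction $Ah+p'_\epsilon(y)h$, whose $|\cdot|_{-\alpha}$--norm can only be controlled at a polynomial price in $|X_\epsilon|_\infty$, and then absorb this price through the moment bounds while keeping the $\varphi$--dependence exactly of order $\|\varphi\|_{L^p(H,\nu)}$; this is why passing to the limit $\epsilon\to0$ (so that $\int_H P^\epsilon_1(\varphi^p)\,d\nu\to\int_H\varphi^p\,d\nu$) and the integrability of $s^{\delta/2-1}$ near $s=0$ are both indispensable.
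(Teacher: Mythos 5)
Your proof is correct and follows essentially the same route as the paper: integrate the commutation identity \eqref{e7} at $t=1$, bound the term $\int_H\langle DP^\epsilon_1\varphi,h\rangle\,d\nu$ and the commutator term involving $Ah+p'_\epsilon h$ via the Bismut--Elworthy--Li estimates of Lemmas \ref{l2.4e} and \ref{l3.1e} (the identity $|Ah|_{-\alpha}=|h|_{2-\alpha}=|h|_{1+\delta+\gamma}$ with $\alpha=1-\gamma-\delta$ being the crux), control $|p'_\epsilon h|_{-\alpha}$ by Sobolev duality together with polynomial moment bounds, integrate the integrable singularity $s^{\delta/2-1}$, and let $\epsilon\to 0$. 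The only notable (and harmless) difference is that the paper integrates against the approximate invariant measure $\nu_\epsilon$, so that the left-hand side of \eqref{e7} collapses exactly by invariance and the limit $\nu_\epsilon\to\nu$ is taken only at the very end, whereas you keep $\nu$ throughout and instead justify the convergence of $\int_H P^\epsilon_1(\langle D\varphi,h\rangle)\,d\nu$ directly.
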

 \begin{proof} 
  Let us integrate   identity  \eqref{e7}  
   with respect to $\nu_\epsilon$ over $H$. Taking into account the invariance of $\nu_\epsilon$, we obtain
\begin{equation}
\label{e3.6e}
\begin{array}{lll}
 \ds \int_H \langle D\varphi(x), h\rangle \nu_\epsilon(dx)&=&\ds \int_H\langle DP^\epsilon_t\varphi(x),h\rangle\,\nu_\epsilon(dx)\\
 \\
&&- \ds  \int_0^t\int_H \langle Ah+p'_\epsilon(x)h  ,DP^\epsilon_s\varphi(x) \rangle ds \,\nu_\epsilon(dx)\\
\\
=:J_1+J_2.
 \end{array}
\end{equation}

  As for $J_1$ we have by \eqref{e3.1e}
  \begin{equation}
\label{e3.7e}
 J_1=\int_H\langle DP^\alpha_t\varphi(x), h\rangle\nu_\epsilon(dx)\le   Ct^{-1+\delta/2}\;\|\varphi\|_{L^p(H,{\nu_\epsilon})} \;|h|_{-\alpha}. 
\end{equation}
As for $J_2$ we have again by \eqref{e3.1e}, taking into account that
$|Ah|_\alpha=|h|_{2-\alpha}=|h|_{1+\delta+\gamma},$
 \begin{equation}
\label{e3.8e}
 \begin{array}{l}
 \ds J_2= \int_0^t\int_H \langle Ah+{p'_\epsilon}(x)h  ,DP^\epsilon_s\varphi(x) \rangle ds \,d\nu_\epsilon\\
 \\
 \ds\le {C\int_0^ts^{-1+\delta/2}\,ds}\,\|\varphi\|_{L^p(H,\nu_\epsilon)}
  \,(|h|_{1+\delta+\gamma}+\||p'_\epsilon\,h|_{-\alpha}\|_{L^q(H,\nu_\epsilon)})
  \end{array}
\end{equation}
On the other hand,  thanks to \eqref{e1.12} and H\"older's inequality, we have
$$
|p'_\epsilon\,h|_{-\alpha}\le c|p'_\epsilon\,h|_{L^{\frac{2n}{n+\alpha}}} \le
c|p'_\epsilon|_{L^{\frac{2n}{\alpha}}}\;| h|_{L^{2}}.$$
 Moreover thanks to \cite[Poposition 4.20]{Da04}, there is $c'>0$ such that
 $$
\||p'_\epsilon\,h|_{-\alpha}\|_{L^q(H,\nu_\epsilon)}\le c\left( \int_H|p'_\epsilon|^q_{L^{\frac{2n}{\alpha}}}\,d\nu_\epsilon  \right)^{1/q}\;|h|\le c' |h|.
 $$
 Consequently
 \begin{equation}
\label{e3.9e}
J_2\le C\int_0^ts^{-1+\delta/2}\,ds\,\|\varphi\|_{L^p(H,\nu_\epsilon)}
  \,(|h|_{1+\delta+\gamma}+c'|h|).
\end{equation}
By \eqref{e3.7e} and \eqref{e3.9e}, setting $t=1$ we find for a suitable constant $C_p$
\begin{equation}
\label{e3.10e}
\int _H \langle D\varphi(x),h\rangle  \, \nu_\epsilon (dx) \le C\|\varphi\|_{L^p(H,\nu_\epsilon)} \,|h|_{1+\delta+\gamma},\quad \forall\;h\in H^{1+\delta+\gamma}(\mathcal O).
\end{equation}  
Letting $\epsilon\to 0$, yields \eqref{e3.5e} (see \cite[Proposition 14]{DaDe15}).

\end{proof}
   
 \subsection{Some consequences of Theorem \ref{t2}. }
 
 In the following we shall assume that the assumptions of Theorem \ref{t2} are fulfilled and set
 $$
 \beta=\frac{1+\gamma+\delta}{2}.
 $$
 (Notice that $\beta>\tfrac12$.) We shall write \eqref{e3.5e} as
  \begin{equation}
\label{e3.11g}
\int _H \langle (-A)^{-\beta}D\varphi(x),h\rangle  \, \nu (dx) \le C\|\varphi\|_{L^p(H,\nu)} \,|h| ,\quad \forall\;\varphi\in C_b^1(H),\,h\in H.
\end{equation}

The proof of the following result is  similar to     \cite[Proposition 11]{DaDe15}, so, it will be omitted.
\begin{Proposition}
\label{p3.3e}
  The linear operator
 $$
\varphi\in C^1_b(H)\subset L^p(H,\nu)\mapsto (-A)^{-\beta} D\varphi\in L^p(H,\nu;H),$$
is closable in $L^p(H,\nu)$ for all $p>1$.
\end{Proposition}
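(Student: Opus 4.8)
The plan is to carry out the classical closability argument, feeding the integral inequality \eqref{e3.11g} with products $\varphi_n\psi$ and exploiting that $C^1_b(H)$ is an algebra which is dense in $L^{p'}(H,\nu)$, where $p'=p/(p-1)$.

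First I would unwind the definition of closability: I must show that whenever $(\varphi_n)\subset C^1_b(H)$ satisfies $\varphi_n\to 0$ in $L^p(H,\nu)$ and $(-A)^{-\beta}D\varphi_n\to G$ in $L^p(H,\nu;H)$, then $G=0$ $\nu$-a.e. Fix $\psi\in C^1_b(H)$ and $h\in H$. Since $\varphi_n\psi\in C^1_b(H)$ and $D(\varphi_n\psi)=\psi\,D\varphi_n+\varphi_n\,D\psi$, with $\psi(x)$ a scalar so that $(-A)^{-\beta}(\psi D\varphi_n)=\psi\,(-A)^{-\beta}D\varphi_n$, applying \eqref{e3.11g} to $\varphi_n\psi$ yields
$$\int_H\psi\,\langle(-A)^{-\beta}D\varphi_n,h\rangle\,d\nu+\int_H\varphi_n\,\langle(-A)^{-\beta}D\psi,h\rangle\,d\nu\le C\,\|\varphi_n\psi\|_{L^p(H,\nu)}\,|h|.$$

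Next I would let $n\to\infty$ in this inequality. The right-hand side tends to $0$, since $\|\varphi_n\psi\|_{L^p(H,\nu)}\le\|\psi\|_\infty\,\|\varphi_n\|_{L^p(H,\nu)}\to 0$. In the second term on the left, $\langle(-A)^{-\beta}D\psi,h\rangle$ is a bounded function: $(-A)^{-\beta}\in L(H)$ for $\beta>0$ (its eigenvalues $\alpha_k^{-\beta}$ are bounded by $\alpha_1^{-\beta}$) and $D\psi$ is bounded, so this function lies in $L^{p'}(H,\nu)$ and the term tends to $0$ because $\varphi_n\to 0$ in $L^p(H,\nu)$. In the first term, $\psi\in L^{p'}(H,\nu)$ (it is bounded and $\nu$ is finite), so it converges to $\int_H\psi\,\langle G,h\rangle\,d\nu$ by the $L^p(H,\nu;H)$-convergence of $(-A)^{-\beta}D\varphi_n$. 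Hence $\int_H\psi\,\langle G,h\rangle\,d\nu\le 0$ for every $\psi\in C^1_b(H)$; replacing $\psi$ by $-\psi$ upgrades this to $\int_H\psi\,\langle G,h\rangle\,d\nu=0$ for all $\psi\in C^1_b(H)$ and all $h\in H$. By density of $C^1_b(H)$ in $L^{p'}(H,\nu)$ this forces $\langle G(\cdot),h\rangle=0$ $\nu$-a.e.; taking $h=e_k$ for every $k$ and using that $(e_k)$ is an orthonormal basis of the separable space $H$ gives $G=0$ $\nu$-a.e., which is the asserted closability.

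The argument is essentially routine and I do not expect a genuine obstacle: the only points deserving a word are the boundedness of $(-A)^{-\beta}$ on $H$, which is immediate from $\beta>0$, and the density of $C^1_b(H)$ in $L^{p'}(H,\nu)$, a standard fact for a Borel probability measure on a separable Hilbert space. The mild care needed is just the bookkeeping of which factor sits in $L^p$ and which in $L^{p'}$ when passing to the limit.
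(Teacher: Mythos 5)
Your argument is correct: the product-rule trick $D(\varphi_n\psi)=\psi D\varphi_n+\varphi_n D\psi$ combined with inequality \eqref{e3.11g} and the density of $C^1_b(H)$ in $L^{p'}(H,\nu)$ is exactly the standard closability argument. The paper itself omits the proof, deferring to \cite[Proposition 11]{DaDe15}, and the argument given there is this same one, so your write-up correctly fills in what the paper leaves implicit.
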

We shall denote by $M_p$ the closure of  $(-A)^{-\beta} D$ and by  $M^*_p$ the dual of $M_p$, so that 
we have
\begin{equation}
\label{e3.11e}
 \int_H \langle M_p\varphi,F\rangle\,d\nu =\int_H \varphi\,M_p^*(F)\,d\nu,\quad\forall\;\varphi\in D(M_p),\;F\in D(M^*_p).
\end{equation}
   Clearly for any $p\in(0,1),$
$$
M_p:D(M_p)\subset L^p(H,\nu)\to L^p(H,\nu;H)
$$ 
and
$$
M_p^*:D(M_p^*)\subset L^q(H,\nu;H)\to L^q(H,\nu),
$$
where $q=\tfrac{p}{p-1}$.

When no confusion may arise we shall drop the sub-index $p$.
\begin{Remark}
\label{r3.4e}
\em
 Obviously, $F\in D(M_p^*)$ iff there exists $K>0$ such that
 \begin{equation}
\label{e3.12e}
 \left|\int_H\langle (-A)^\beta D\varphi, F\rangle\,d\nu\right|\le K\|\varphi\|_{L^p(H,\nu)},
 \quad\forall\;\varphi\in C^1_b(H).
\end{equation}
In this case we have
\begin{equation}
\label{e3.13e}
\|M_p^*(F)\|_{L^q(H,\nu;H)}\le K.
\end{equation}
\end{Remark}
 Let us show an integration by parts formula.
 \begin{Proposition}
 \label{p3.4e}
For any  $z\in H$   there exists a function  $v_z\in L^q(H,\nu)$ for all $q\in (1,\infty)$ such that
\begin{equation}
\label{e3.14e}
\int_H \langle (-A)^{-\beta} D\varphi(x),z\rangle\,\nu(dx)=\int_H \varphi(x)\,v_z(x)\,\nu(dx)
\end{equation}
and
\begin{equation}
\label{e3.15e}
\|v_z\|_{L^q(H,\nu)}\le C_p|z|,
\end{equation}
where $p=\tfrac{q}{q-1}.$

 Therefore    there exists  the   Fomin derivative of   $\nu$ in all  directions of   $D((-A)^{\beta})$. 
\end{Proposition}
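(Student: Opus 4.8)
The plan is to read off the statement from the adjoint machinery of Remark~\ref{r3.4e} applied to the \emph{constant} map $F\equiv z$. First I would fix $z\in H$ and view it as the constant function $F_z(x)\equiv z$; being bounded, $F_z\in L^q(H,\nu;H)$ for every $q\in(1,\infty)$. Applying \eqref{e3.11g} (Theorem~\ref{t2}) with $h=z$, to both $\varphi$ and $-\varphi$, gives
$$
\left|\int_H\langle(-A)^{-\beta}D\varphi(x),z\rangle\,\nu(dx)\right|\le C_p\,\|\varphi\|_{L^p(H,\nu)}\,|z|,\qquad\varphi\in C^1_b(H),
$$
which is exactly the criterion of Remark~\ref{r3.4e}, with $K=C_p|z|$, guaranteeing that $F_z\in D(M_p^*)$. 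I then set $v_z:=M_p^*(F_z)$. Since, by Proposition~\ref{p3.3e}, $M_p$ is the closure of $(-A)^{-\beta}D$ started on $C^1_b(H)$, we have $C^1_b(H)\subset D(M_p)$ with $M_p\varphi=(-A)^{-\beta}D\varphi$ there, so the duality relation \eqref{e3.11e} yields \eqref{e3.14e}, while the bound \eqref{e3.15e} is precisely \eqref{e3.13e}.

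Next I would check that $v_z$ may be chosen independently of $q$. If $1<q_1<q_2<\infty$ and $v_z^{(1)},v_z^{(2)}$ are the functions obtained above for these two exponents, then $v_z^{(2)}\in L^{q_2}(H,\nu)\subset L^{q_1}(H,\nu)$ and, by \eqref{e3.14e}, $\int_H\varphi\,(v_z^{(1)}-v_z^{(2)})\,d\nu=0$ for all $\varphi\in C^1_b(H)$. Since $H$ is separable and $C^1_b(H)$ is dense in $L^{r}(H,\nu)$ for every $r<\infty$ (it contains the real and imaginary parts of $x\mapsto e^{i\langle x,k\rangle}$, $k\in H$, whose linear span is dense, as a complex measure with vanishing characteristic functional is zero), this forces $v_z^{(1)}=v_z^{(2)}$ $\nu$-a.e. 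Hence there is a single $v_z\in\bigcap_{q\in(1,\infty)}L^q(H,\nu)$ satisfying \eqref{e3.14e} and \eqref{e3.15e} for every $q$.

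Finally, for the Fomin differentiability, let $h\in D((-A)^\beta)$ and put $z:=(-A)^\beta h\in H$, so that $(-A)^{-\beta}z=h$. Since $(-A)^{-\beta}$ is bounded and self-adjoint, for $\varphi\in C^1_b(H)$ one has $\langle(-A)^{-\beta}D\varphi(x),z\rangle=\langle D\varphi(x),h\rangle$, and \eqref{e3.14e} becomes
$$
\int_H\langle D\varphi(x),h\rangle\,\nu(dx)=\int_H\varphi(x)\,v_z(x)\,\nu(dx),\qquad\varphi\in C^1_b(H),
$$
with $v_z\in L^1(H,\nu)$. By \cite[Definition~1]{Pu98} this is exactly the statement that $\nu$ is Fomin differentiable along $h$, the logarithmic derivative being $v_z$ (up to sign); letting $h$ range over $D((-A)^\beta)$ gives the last assertion. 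I do not expect a genuine obstacle here: the entire analytic content already sits in Theorem~\ref{t2} and in the closability Proposition~\ref{p3.3e}, and the only points that need a word of care are the $q$-independence of $v_z$ (density of $C^1_b(H)$ in $L^p(H,\nu)$) and the check that the resulting $L^1$-bound on $\varphi\mapsto\int_H\langle D\varphi,h\rangle\,d\nu$ matches the precise definition of Fomin differentiability adopted in \cite{Pu98}; both are routine.
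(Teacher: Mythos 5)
Your proof follows essentially the same route as the paper: apply \eqref{e3.11g} to the constant field $F_z\equiv z$, invoke Remark~\ref{r3.4e} to get $F_z\in D(M_p^*)$, and set $v_z=M_p^*(F_z)$. The additional checks you supply (the $q$-independence of $v_z$ via density of $C^1_b(H)$ in $L^p(H,\nu)$, and the explicit verification of Fomin differentiability along $D((-A)^\beta)$) are correct and merely make explicit what the paper leaves implicit.
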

\begin{proof}
Set $F_z(x)=h$ for all $x\in H$. By \eqref{e3.11g} we deduce
 \begin{equation}
\label{e3.17e}
\left|\int _H \langle (-A)^{-\beta}D\varphi(x),F_z(x)\rangle  \, \nu (dx)\right| \le C\|\varphi\|_{L^p(H,\nu)} \,|z| ,\quad \forall\;\varphi\in C^1_b(H),\,z\in H.
\end{equation}
By Remark \ref{r3.4e} it follows that $F_z\in D(M^*_p)$, so that setting $v_z=M^*_p(F_z)$ we see that \eqref{e3.15e} holds.
\end{proof}
Now, recalling \eqref{e1.8} we find
\begin{Corollary}
 \label{c3.6e}
 For all $h\in \N$ we have
 \begin{equation}
\label{e3.18e}
\int_H  D_h\varphi(x) \,\nu(dx)=\alpha_h^\beta\int_H \varphi(x)\,v_h(x)\,\nu(dx)
\end{equation}
where $v_h=v_{e_h}$ and
\begin{equation}
\label{e3.19e}
\|v_h\|_{L^q(H,\nu)}\le C_p,
\end{equation}
where $p=\tfrac{q}{q-1}.$
 \end{Corollary}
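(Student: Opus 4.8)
The plan is to obtain Corollary \ref{c3.6e} as an immediate specialization of Proposition \ref{p3.4e} to the particular direction $z=e_h$, combined with the elementary spectral action of $(-A)^{-\beta}$ on the eigenbasis $(e_h)$.

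First I would record the algebraic identity that does all the work. Since $A$ is self--adjoint with $Ae_h=-\alpha_h e_h$ by \eqref{e1.8}, functional calculus gives $(-A)^{-\beta}e_h=\alpha_h^{-\beta}e_h$, and $(-A)^{-\beta}$ is self--adjoint on $H$. Hence for every $\varphi\in C^1_b(H)$ and every $x\in H$,
$$
\langle (-A)^{-\beta}D\varphi(x),e_h\rangle=\langle D\varphi(x),(-A)^{-\beta}e_h\rangle=\alpha_h^{-\beta}\langle D\varphi(x),e_h\rangle=\alpha_h^{-\beta}D_h\varphi(x),
$$
where $D_h\varphi(x)=\langle D\varphi(x),e_h\rangle$ is the directional derivative along $e_h$.

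Next I would apply Proposition \ref{p3.4e} with $z=e_h$: there is $v_{e_h}\in L^q(H,\nu)$ for all $q\in(1,\infty)$ with
$$
\int_H\langle (-A)^{-\beta}D\varphi(x),e_h\rangle\,\nu(dx)=\int_H\varphi(x)\,v_{e_h}(x)\,\nu(dx),
$$
and, by \eqref{e3.15e}, $\|v_{e_h}\|_{L^q(H,\nu)}\le C_p|e_h|=C_p$ since $(e_h)$ is orthonormal. Inserting the identity of the previous paragraph into the left--hand side and multiplying through by $\alpha_h^{\beta}$ gives
$$
\int_H D_h\varphi(x)\,\nu(dx)=\alpha_h^{\beta}\int_H\varphi(x)\,v_{e_h}(x)\,\nu(dx),
$$
so setting $v_h:=v_{e_h}$ yields \eqref{e3.18e}, while \eqref{e3.19e} is precisely the bound just recorded.

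There is essentially no obstacle here: the argument is a one--line reduction that uses nothing beyond Proposition \ref{p3.4e}, the self--adjointness of $(-A)^{-\beta}$, and $|e_h|=1$. The only small point worth flagging is that the identity is stated for $\varphi\in C^1_b(H)$, the class where $D_h\varphi$ is defined classically; if desired, the closedness of $M_p$ from Proposition \ref{p3.3e} would allow one to extend \eqref{e3.18e} to all $\varphi\in D(M_p)$, but this is not needed for the Corollary as stated.
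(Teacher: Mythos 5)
Your proof is correct and is exactly the argument the paper intends: the Corollary is stated with only the remark ``recalling \eqref{e1.8}'', i.e., one applies Proposition \ref{p3.4e} with $z=e_h$, uses $(-A)^{-\beta}e_h=\alpha_h^{-\beta}e_h$ together with self--adjointness, and notes $|e_h|=1$ for the bound \eqref{e3.19e}. Nothing further is needed.
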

Let us now find an expression of $M^*$.
\begin{Proposition}
\label{p3.7e}
Let $F=\sum_{h=1}^m f_h e_h$ with $f_h\in C^1_b(H)$ and $m\in\N$. Then $F\in D(M^*)$ and it results
\begin{equation}
\label{e3.20e}
M^*(F)=-\mbox{\rm div}\,[(-A)^{-\beta}F]+\sum_{h=1}^m f_h v_h.
\end{equation}

\end{Proposition}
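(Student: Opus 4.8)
The plan is to reduce \eqref{e3.20e} to the integration by parts formula of Corollary~\ref{c3.6e}, by expanding $\langle(-A)^{-\beta}D\varphi,F\rangle$ along the eigenbasis $(e_h)$ and then recognizing the resulting finite sum as $\int_H\varphi\,G\,d\nu$ with $G$ the claimed expression.

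First I would fix $\varphi\in C^1_b(H)$ and, using $(-A)^{-\beta}e_h=\alpha_h^{-\beta}e_h$ together with the self-adjointness of $(-A)^{-\beta}$, write
\begin{equation*}
\int_H\langle(-A)^{-\beta}D\varphi,F\rangle\,d\nu=\sum_{h=1}^m\int_H f_h(x)\,\langle D\varphi(x),(-A)^{-\beta}e_h\rangle\,\nu(dx)=\sum_{h=1}^m\alpha_h^{-\beta}\int_H f_h\,D_h\varphi\,d\nu .
\end{equation*}
Next I would apply the Leibniz rule $f_h\,D_h\varphi=D_h(f_h\varphi)-\varphi\,D_h f_h$. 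Since $f_h,\varphi\in C^1_b(H)$, the product $f_h\varphi$ is again in $C^1_b(H)$, so Corollary~\ref{c3.6e} applies to $f_h\varphi$ and yields $\int_H D_h(f_h\varphi)\,d\nu=\alpha_h^\beta\int_H f_h\varphi\,v_h\,d\nu$. Substituting and summing over $h=1,\dots,m$ gives
\begin{equation*}
\int_H\langle(-A)^{-\beta}D\varphi,F\rangle\,d\nu=\sum_{h=1}^m\int_H f_h\varphi\,v_h\,d\nu-\sum_{h=1}^m\alpha_h^{-\beta}\int_H\varphi\,D_h f_h\,d\nu .
\end{equation*}

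I would then identify the last sum: since $(-A)^{-\beta}F=\sum_{h=1}^m\alpha_h^{-\beta}f_h e_h$ has only finitely many nonzero components, $\mathrm{div}\,[(-A)^{-\beta}F]=\sum_{h=1}^m\alpha_h^{-\beta}D_h f_h$, so the identity becomes
\begin{equation*}
\int_H\langle(-A)^{-\beta}D\varphi,F\rangle\,d\nu=\int_H\varphi\Big(-\mathrm{div}\,[(-A)^{-\beta}F]+\sum_{h=1}^m f_h v_h\Big)\,d\nu=:\int_H\varphi\,G\,d\nu .
\end{equation*}
To finish I would check that $G\in L^q(H,\nu)$ with $q=\tfrac{p}{p-1}$: the divergence term is a finite sum of bounded continuous functions because each $f_h\in C^1_b(H)$, and each $f_h v_h\in L^q(H,\nu)$ because $f_h$ is bounded and $v_h\in L^q(H,\nu)$ for every $q\in(1,\infty)$ by Corollary~\ref{c3.6e}. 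Hence the displayed identity shows that $\varphi\mapsto\int_H\langle M\varphi,F\rangle\,d\nu$ is bounded by $\|G\|_{L^q(H,\nu)}\,\|\varphi\|_{L^p(H,\nu)}$ on the core $C^1_b(H)$ of $M=M_p$; by Remark~\ref{r3.4e} this means $F\in D(M^*)$, and comparing with \eqref{e3.11e} forces $M^*(F)=G$, i.e. \eqref{e3.20e}.

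The computation is essentially mechanical; the only points worth a moment's care — not genuine obstacles — are that $C^1_b(H)$ is closed under products (so that Corollary~\ref{c3.6e} may legitimately be applied to $f_h\varphi$), the legitimacy of interpreting $\mathrm{div}\,[(-A)^{-\beta}F]$ as the finite sum $\sum_{h=1}^m\alpha_h^{-\beta}D_h f_h$, and the fact that it suffices to verify the adjoint relation against $\varphi\in C^1_b(H)$, which is exactly because this space is a core for $M_p$.
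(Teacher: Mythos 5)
Your argument is exactly the paper's: expand $\langle(-A)^{-\beta}D\varphi,F\rangle$ along the eigenbasis, use the Leibniz rule and Corollary~\ref{c3.6e} applied to $\varphi f_h$, and recognize the remaining sum as $-\mathrm{div}\,[(-A)^{-\beta}F]$. Your additional checks (that $G\in L^q(H,\nu)$ and that testing against $C^1_b(H)$ suffices via Remark~\ref{r3.4e}) only make explicit what the paper leaves implicit, so the proposal is correct and follows the same route.
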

\begin{proof}
Write
$$
\begin{array}{l}
\ds \int _H \langle (-A)^{-\beta}D\varphi,F\rangle  \, d\nu =\sum_{h=1}^m \alpha_h^{-\beta}\int_H D_h\varphi\,f_h\,d\nu\\
\\
\ds=\sum_{h=1}^m \alpha_h^{-\beta}\int_H D_h(\varphi\,f_h)\,d\nu-\sum_{h=1}^m \alpha_h^{-\beta}\int_H \varphi\,D_hf_h\,d\nu
\end{array} 
$$
Thanks to Corollary \ref{c3.6e} we have
$$
\int _H \langle (-A)^{-\beta}D\varphi,F\rangle  \, d\nu =\sum_{h=1}^m  \int_H \varphi\,v_h\,f_h\,d\nu- \int_H \varphi\,\mbox{\rm div}\,[(-A)^{-\beta}F]\,d\nu
$$
which implies \eqref{e3.20e}.
\end{proof}

\section{Application to surface integrals}

 We still assume in this section  that the assumptions of Theorem \ref{t2} are fulfilled and set  $\beta=\tfrac{1+\gamma+\delta}{2}.$

Let moreover  $g:H\to \R$  be a Borel function. For all  $\varphi\in L^1(H,\nu)$ we set 
$$
G_\varphi(r)=\int_{\{g\le r\}}\varphi\,d\nu,\quad r\in g(H).
$$
\begin{Definition}
Assume that  $G_\varphi(\cdot)$ is $C^1$ for any  $\varphi\in C_b(H)$ and for $r\in g(H)$  there exists  a measure  $\sigma_r$ concentrated on  $\{g=r\}$ such that
 $$
D_rG_\varphi(r)=\lim_{\epsilon\to 0}\frac1{2\epsilon}\int_{\{r-\epsilon\le  g\le r+\epsilon\}}\varphi\,d\nu=\int_{\{g=r\}}\varphi\,d\sigma_r.
$$
Then  $\sigma_r$ is called the  {\em surface measure} on  $\{g=r\}$ determined by   $\nu$.
\end{Definition}
When $\nu$ is Gaussian several papers have been devoted to prove existence of surface measures, starting from the pioneering  works by   Airault and Malliavin, \cite{AiMa88}  and  Fejer-de La Pradelle, \cite{FePr92}; we also quote \cite{DaLuTu14}.  The typical assumption is
\begin{equation}
\label{e4.1ef}
F_g:=\frac{Mg}{|Mg|^2}\in D(M^*).
\end{equation}
Concerning the measure $\nu$, the following result has been proved in \cite{DaLuTu} as a simple generalization of \cite{DaLuTu14}.
 \begin{Theorem}
\label{t4.1e}
Let $M$ be the closure of  $(-A)^{-\beta}D$.
Assume that   $g\in D(M)$ and
\begin{equation}
\label{e4.1e}
F_g:=\frac{Mg}{|Mg|^2}\in D(M^*).
\end{equation}
Then there exists   a {\em surface measure} $\sigma_r$  on $\{g=r\}$ determined by   $\nu$.
\end{Theorem}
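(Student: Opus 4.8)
The plan is to localise the statement onto the real line by pushing $\nu$ forward under $g$, to turn the integration-by-parts machinery of Section~3 into a one-dimensional distributional identity, and then to read off both the $C^1$-regularity of $G_\varphi$ and the surface measure from that identity. \emph{Step 1 (chain rule and a divergence identity).} First I would check that $M$ respects composition with smooth functions of $g$: if $g\in D(M)$ and $\psi\in C^1_b(\R)$ then $\psi(g)\in D(M)$ with $M[\psi(g)]=\psi'(g)\,Mg$, and if moreover $\varphi\in C^1_b(H)$ then $\varphi\,\psi(g)\in D(M)$ with $M[\varphi\,\psi(g)]=\psi(g)\,M\varphi+\varphi\,\psi'(g)\,Mg$. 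On $C^1_b(H)$ these are just the Leibniz rule for $(-A)^{-\beta}D$; in general one approximates $g$ in the graph norm of $M$ by functions of $C^1_b(H)$ and passes to the limit, using that $M$ is closed and that $\psi,\psi'$ are bounded and continuous. Since $\langle Mg,F_g\rangle=|Mg|^2/|Mg|^2=1$ $\nu$-a.e.\ by the very definition of $F_g$, for $\varphi\in C^1_b(H)$ and $\psi\in C^1_c(\R)$ one gets the pointwise identity $\varphi\,\psi'(g)=\langle M[\varphi\,\psi(g)],F_g\rangle-\psi(g)\langle M\varphi,F_g\rangle$; integrating over $H$ and using $F_g\in D(M^*)$ together with \eqref{e3.11e} yields
\[
\int_H\varphi\,\psi'(g)\,d\nu=\int_H\psi(g)\,\big[\varphi\,M^*(F_g)-\langle M\varphi,F_g\rangle\big]\,d\nu ,
\]
all integrands lying in $L^1(H,\nu)$ because $M^*(F_g)\in L^q(H,\nu)$, $F_g\in L^q(H,\nu;H)$ and $M\varphi=(-A)^{-\beta}D\varphi$ is bounded.

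\emph{Step 2 (reduction to $\R$ and $C^1$-regularity).} For $\varphi\in C^1_b(H)$ introduce the finite signed Borel measures on $\R$
\[
\mu_\varphi(B)=\int_{\{g\in B\}}\varphi\,d\nu,\qquad \lambda_\varphi(B)=\int_{\{g\in B\}}\big[\varphi\,M^*(F_g)-\langle M\varphi,F_g\rangle\big]\,d\nu ,
\]
so that $G_\varphi(r)=\mu_\varphi((-\infty,r])$ and the identity of Step~1 reads $\int_\R\psi'\,d\mu_\varphi=\int_\R\psi\,d\lambda_\varphi$ for all $\psi\in C^1_c(\R)$, i.e.\ $(\mu_\varphi)'=-\lambda_\varphi$ in $\mathcal D'(\R)$. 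Taking $\varphi\equiv1$ (so $M\varphi=0$), the law $\mu:=\nu\circ g^{-1}$ of $g$ has distributional derivative the finite measure $-\lambda_1$; since a measure whose distributional derivative is again a measure is absolutely continuous with a $BV$ density, $\mu=h_1\,dr$ with $0\le h_1\in BV(\R)\cap L^1(\R)$. As $\mu_\varphi\ll\mu$ and $\lambda_\varphi\ll\mu$ (both annihilate $g$-preimages of $\mu$-null sets), one gets $\mu_\varphi=h_\varphi\,dr$ and $\lambda_\varphi=\ell_\varphi\,dr$ with $\ell_\varphi\in L^1(\R)$, whence $h_\varphi'=-\ell_\varphi$ forces $h_\varphi\in W^{1,1}_{\mathrm{loc}}(\R)\subset C(\R)$. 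Consequently $G_\varphi(r)=\int_{-\infty}^r h_\varphi(s)\,ds\in C^1$ and
\[
D_rG_\varphi(r)=h_\varphi(r)=\lim_{\epsilon\to0}\frac1{2\epsilon}\int_{\{r-\epsilon\le g\le r+\epsilon\}}\varphi\,d\nu=:\Phi_r(\varphi).
\]

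\emph{Step 3 (the surface measure).} Disintegrate $\nu$ along $g$ as $\nu(dx)=\nu_r(dx)\,\mu(dr)$, with $\nu_r$ a probability measure carried by $\{g=r\}$ for $\mu$-a.e.\ $r$ (regular conditional probabilities exist since $H$ is Polish). Then for $\varphi\in C_b(H)$ one has $\frac1{2\epsilon}\int_{\{r-\epsilon\le g\le r+\epsilon\}}\varphi\,d\nu=\frac1{2\epsilon}\int_{r-\epsilon}^{r+\epsilon}\big(\int_H\varphi\,d\nu_s\big)h_1(s)\,ds$, which by Lebesgue differentiation converges to $h_1(r)\int_H\varphi\,d\nu_r$ for a.e.\ $r$; for $\varphi\in C^1_b(H)$ this limit is $D_rG_\varphi(r)$ by Step~2, so that $D_rG_\varphi(r)=\int_{\{g=r\}}\varphi\,d\sigma_r$ with $\sigma_r:=h_1(r)\,\nu_r$, a finite Borel measure concentrated on $\{g=r\}$. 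To upgrade this to every $\varphi\in C_b(H)$ one notes that the measures $\frac1{2\epsilon}\nu|_{\{r-\epsilon\le g\le r+\epsilon\}}$ have bounded mass ($\to h_1(r)$) and, for a.e.\ $r$, are uniformly tight (their mass on $K^c$ tends to $h_1(r)\nu_r(K^c)$, which is small for $K$ large compact), hence converge weakly to $\sigma_r$; since $\Phi_r$ is positive and dominated by $h_1(r)\|\cdot\|_\infty$ and $C^1_b(H)$ is a point-separating subalgebra dense in $C_b(K)$ on compacts (Stone–Weierstrass), the continuity in $r$ of $r\mapsto\int\varphi\,d\nu_r$ carries over from $C^1_b(H)$ to $C_b(H)$ on the range of $g$, giving $G_\varphi\in C^1$ and the asserted formula for all $\varphi\in C_b(H)$. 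This final passage is exactly what is carried out in \cite{DaLuTu14,DaLuTu}.

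The soft ingredients are Step~1 (once the chain rule for the abstract closure $M$ is secured by approximation) and the one-dimensional bootstrap of Step~2. The genuinely delicate point, and the main obstacle, is the part of Step~3 that promotes the clean $C^1_b(H)$-statement to all $\varphi\in C_b(H)$ with an honest $C^1$ function $G_\varphi$ and a measure $\sigma_r$ truly concentrated on $\{g=r\}$: this needs continuity and suitable positivity of the density $h_1$ on the relevant range of $g$, weak continuity in $r$ of the conditional laws $\nu_r$, and a representation on the infinite-dimensional space $H$ — precisely the package already developed in \cite{DaLuTu14,DaLuTu}, which is why the result can be quoted here rather than reproved.
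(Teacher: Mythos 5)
The paper does not actually prove Theorem \ref{t4.1e}: it is quoted from \cite{DaLuTu} (described there as ``a simple generalization of \cite{DaLuTu14}''), so there is no internal proof to compare against. Your Steps~1 and~2 are a correct reconstruction of the first half of that cited argument. The chain and product rules for the closed operator $M$ are legitimate (approximate $g$ in graph norm by $C^1_b$ functions, pass to a subsequence converging $\nu$-a.e., use boundedness and continuity of $\psi,\psi'$ and closedness of $M$); the resulting identity $\int_H\varphi\,\psi'(g)\,d\nu=\int_H\psi(g)\bigl[\varphi\,M^*(F_g)-\langle M\varphi,F_g\rangle\bigr]\,d\nu$ is the right one-dimensional integration by parts; and the bootstrap $\mu=h_1\,dr$ with $h_1\in BV$, $h_\varphi\in W^{1,1}_{\mathrm{loc}}(\R)\subset C(\R)$, hence $G_\varphi\in C^1$ for $\varphi\in C^1_b(H)$, is sound. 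Up to that point you have supplied more detail than the paper records.

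The genuine gap is concentrated in Step~3, and it is exactly the content of the results you end up citing. (i) Lebesgue differentiation identifies $D_rG_\varphi(r)$ with $h_1(r)\int\varphi\,d\nu_r$ only outside a null set of $r$'s depending on $\varphi$ (and on the compact $K$ in your tightness claim); a countable dense family repairs the dependence on $\varphi$, but the theorem requires a measure $\sigma_r$ concentrated on $\{g=r\}$ for \emph{every} $r\in g(H)$, and on the exceptional set your $\nu_r$ is not even defined. (ii) The uniform-in-$\epsilon$ tightness of $\tfrac1{2\epsilon}\nu|_{\{|g-r|\le\epsilon\}}$ is asserted rather than proved: ``its mass on $K^c$ tends to $h_1(r)\nu_r(K^c)$'' is again only an a.e.\ statement with the null set depending on $K$, and smallness of a limit does not give smallness uniformly along the family. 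Representing the positive functional $\varphi\mapsto D_rG_\varphi(r)$ on the non--locally-compact space $H$ by a Borel measure is the hard analytic point of \cite{AiMa88}, \cite{FePr92}, \cite{Pu98} and \cite{DaLuTu14}; it rests on the quantitative bound $|D_rG_\varphi(r)|\le\|\varphi\,M^*(F_g)-\langle M\varphi,F_g\rangle\|_{L^1(H,\nu)}$ (from $h_\varphi(r)=\int_r^\infty\ell_\varphi(s)\,ds$) applied to suitable cutoffs, not on the disintegration. (iii) Continuity of $r\mapsto\int\varphi\,d\nu_r$ can only be extracted where $h_1(r)>0$; where $h_1(r)=0$ one should instead note $|h_\varphi(r)|\le\|\varphi\|_\infty h_1(r)=0$ and take $\sigma_r=0$. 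Since you ultimately defer precisely these points to \cite{DaLuTu14} and \cite{DaLuTu}, your argument reduces the theorem to the same citation the paper itself uses; what you have genuinely added is the derivation of the one-dimensional identity that feeds into it.
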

\begin{Remark}
\em A similar result was obtained by  \cite{Pu98} and \cite{BoMa}, under the assumption that the Fomin derivative of $\nu$ exists in sufficiently many directions $z\in H$ where identity \eqref{e3.18e} is fulfilled.

\end{Remark}
Notice that  by \eqref{e3.20e} it follows that (formally) 
\begin{equation}
\label{e4.2e}
\begin{array}{l}
M^*\left(\frac{(-A)^{-\beta}Dg}{|(-A)^{-\beta}Dg|^2}\right)\\
\\
\ds= 
\mbox{\rm div}\;\left[\frac{(-A)^{-2\beta}Dg}{|(-A)^{-\beta}Dg|^2}\right]+\frac1{|(-A)^{-\beta}Dg|^2}\sum_{h=1} ^\infty \alpha_h^{-\beta} v_h D_hg.
\end{array}
\end{equation}

In this section we shall prove that condition \eqref{e4.1e} is fulfilled for  balls and half--spaces.

\subsection{Surface measure of balls}
In this subsection  we shall  assume $n=1$ so that
\begin{equation}
\label{e4.3e}
\sum_{h=1}^\infty \alpha^\beta_h<\infty,
\end{equation}
because $\beta>1/2$.
Let $g(x)=|x|^2$ and take  $r>0$, so that   the surface $\{g=r\}$ coincides with the ball $B_{\sqrt r}$ in $H$ with center $0$ and radius $\sqrt r$. To apply Theorem \ref{t4.1e}
we have to show that $F_g\in D(M^*)$,  where
$$
F_g(x)=\frac{(-A)^{-\beta} x}{2|(-A)^{-\beta} x|^2}.
$$
 To this aim it is convenient to introduce a regular  finite dimensional approximation of $F_g$  setting
$$
F^n_g(x)=\frac{(-A)^{-\beta} P_nx}{n^{-1}+2|(-A)^{-\beta} x|^2},
$$
where $P_n$ is the orthogonal projector  on span $(e_1,...,e_n)$.
Clearly $F^n_g\in D(M^*)$ and by \eqref{e3.20e}  we have  
\begin{equation}
\label{e3.17}
M^*(F_g^n)(x)=-\mbox{\rm div}\;\left[ \frac{(-A)^{-2\beta} P_nx}{n^{-1}+2|(-A)^{-1} x|^2}  \right]+\frac{\sum_{h=1}^n\alpha_h^{-\beta}x_hv_h}{n^{-1}+2|(-A)^{-\beta} x|^2}.
\end{equation}
But for $j=1,...,n$ we have
$$
D_j\left[ \frac{(-A)^{-2\beta} x_j}{n^{-1}+2|(-A)^{-\beta} x|^2}  \right]=\frac{\alpha_j^{-2\beta} }{n^{-1}+2|(-A)^{-\beta} x|^2}-\frac{4\alpha_j^{-4\beta} x^2_j}{(n^{-1}+2|(-A)^{-\beta} x|^2)^2}. 
$$
Therefore
\begin{equation}
\label{e3.18}
M^*(F^n_g)(x)=- \frac{\mbox{\rm Tr}\, [P_n(-A)^{-2\beta}] }{n^{-1}+2|(-A)^{-\beta} x|^2}+\frac{4 |(-A)^{-2\beta}P_nx|^2}{(n^{-1}+2|(-A)^{-\beta} x|^2)^2} +\frac{\sum_{h=1}^n\alpha_h^{-\beta}x_hv_h}{n^{-1}+2|(-A)^{-\beta} x|^2}.
\end{equation}
\begin{Proposition}
\label{p4.2e}
For any $r>0$ there exists   the  {\em surface measure} $\sigma_r$  on $B_{\sqrt r}$ determined by   $\nu$.

\end{Proposition}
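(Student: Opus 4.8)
The plan is to apply Theorem \ref{t4.1e} with $g(x)=|x|^2$, so the essential task is to verify that $F_g\in D(M^*)$, i.e. to produce a uniform bound of the form \eqref{e3.12e} for $F_g$, using the finite-dimensional approximations $F^n_g$ introduced above. First I would observe that $g\in D(M)$: indeed $Mg=(-A)^{-\beta}Dg=2(-A)^{-\beta}x$, and since $n=1$ gives $\sum_h\alpha_h^{-2\beta}=\mathrm{Tr}[(-A)^{-2\beta}]<\infty$ by \eqref{e4.3e} (note $\beta>1/2$, so $2\beta>1$), one checks that $|2(-A)^{-\beta}x|^2=4\sum_h\alpha_h^{-2\beta}x_h^2$ is integrable against $\nu$ thanks to \eqref{e1.4g}; together with approximation by cylindrical functions this puts $g$ in $D(M)$. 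Then I would show $M^*(F^n_g)\to M^*(F_g)$ in an appropriate sense by establishing that the right-hand side of \eqref{e3.18} is bounded in $L^q(H,\nu)$ uniformly in $n$, and that $F^n_g\to F_g$ in $L^p(H,\nu;H)$; closedness of $M^*$ (as the adjoint of the densely defined $M$) then gives $F_g\in D(M^*)$.

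The heart of the estimate is the uniform $L^q(H,\nu)$ control of the three terms in \eqref{e3.18}. The first term is dominated by $\mathrm{Tr}[(-A)^{-2\beta}]/(2|(-A)^{-\beta}x|^2)$, so I need a bound on negative moments $\int_H |(-A)^{-\beta}x|^{-2q}\,\nu(dx)$; this is where the genuine work lies. The second term is bounded by $4|(-A)^{-2\beta}P_nx|^2/(2|(-A)^{-\beta}x|^2)^2\le C|(-A)^{-\beta}x|^{-2}$ once one notes $|(-A)^{-2\beta}x|\le\alpha_1^{-\beta}|(-A)^{-\beta}x|$, so it reduces to the same negative-moment bound. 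For the third term, $|\sum_{h=1}^n\alpha_h^{-\beta}x_hv_h|\le(\sum_h\alpha_h^{-2\beta}x_h^2)^{1/2}(\sum_h v_h^2)^{1/2}=\tfrac12|(-A)^{-\beta}x|\,(\sum_h v_h^2)^{1/2}$, which after division by $2|(-A)^{-\beta}x|^2$ leaves $\tfrac14|(-A)^{-\beta}x|^{-1}(\sum_h v_h^2)^{1/2}$; here I would combine the negative-moment bound with Corollary \ref{c3.6e} (which gives $\|v_h\|_{L^q}\le C_p$ uniformly in $h$) via a Hölder argument, noting that the fast decay $\sum_h\alpha_h^{-\beta}<\infty$ is available — though to handle $(\sum_h v_h^2)^{1/2}$ one needs a little more than $\ell^2$ summability of the $L^q$-norms, so I would instead split as $|(-A)^{-\beta}x|^{-1}\sum_h\alpha_h^{-\beta}|x_h|\,|v_h|/|(-A)^{-\beta}x|$ and absorb $\alpha_h^{-\beta}|x_h|\le|(-A)^{-\beta}x|$ to get the bare sum $\sum_h\alpha_h^{-\beta/2}\cdot(\alpha_h^{-\beta/2}|v_h|)$, keeping things summable.

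The main obstacle is therefore the negative-moment estimate: showing $\int_H|(-A)^{-\beta}x|^{-2q}\,\nu(dx)<\infty$ for the relevant $q$. The natural route is to exploit absolute continuity / regularity properties of $\nu$ — for instance a Harnack-type or density bound for $\nu$ relative to the Gaussian measure $N_{Q_\infty}$ of the linear (Ornstein--Uhlenbeck) dynamics with $Q_\infty=\tfrac12(-A)^{-1-\gamma}$ — and then verify the Gaussian negative-moment bound $\int_H|(-A)^{-\beta}x|^{-2q}\,N_{Q_\infty}(dx)<\infty$, which holds because $(-A)^{-\beta}$ applied to a $Q_\infty$-distributed variable has a nondegenerate Gaussian law on every finite-dimensional subspace with summable variances (so the radial part has a density bounded near the origin after the usual small-ball argument). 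Once this negative-moment bound is in hand, the three estimates above give a uniform-in-$n$ bound $\|M^*(F^n_g)\|_{L^q(H,\nu)}\le K$, Remark \ref{r3.4e} applies to the limit, and Theorem \ref{t4.1e} yields the surface measure $\sigma_r$ on $B_{\sqrt r}$ for every $r>0$.
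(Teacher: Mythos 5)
Your overall architecture coincides with the paper's: reduce to Theorem \ref{t4.1e}, approximate $F_g$ by the cut-off finite-dimensional fields $F^n_g$, bound $M^*(F^n_g)$ uniformly in $L^2(H,\nu)$ using formula \eqref{e3.18}, and invoke closedness of $M^*$. The treatment of the three terms in \eqref{e3.18} is essentially the paper's Step~2 (and your handling of the $\sum_h\alpha_h^{-\beta}x_hv_h$ term, though slightly muddled in the absorption step, can be repaired exactly as in the paper by estimating $\|\sum_h\alpha_h^{-\beta}x_hv_h\|_{L^4}\le\sum_h\alpha_h^{-\beta}\|x_h\|_{L^8}\|v_h\|_{L^8}$ and using $\sum_h\alpha_h^{-\beta}<\infty$ together with Corollary \ref{c3.6e}).

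The genuine gap is the negative-moment estimate $\int_H|(-A)^{-\beta}x|^{-2q}\,\nu(dx)<\infty$, which you correctly identify as the heart of the matter but then do not prove. Your proposed route --- a density or Harnack-type bound for $\nu$ relative to the Gaussian invariant measure $N_{Q_\infty}$ of the linear dynamics, followed by a Gaussian small-ball computation --- is not available here: equation \eqref{e4.1} is not a gradient system for general $\gamma$ (the nonlinear drift $p(X)$ is not of the form $-QDU(X)$ with $Q=(-A)^{-\gamma}$), so $\nu$ has no explicit Gibbs density with respect to $N_{Q_\infty}$, and no quantitative bound on $d\nu/dN_{Q_\infty}$ (indeed not even absolute continuity) is known or established in the paper. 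Strong Feller plus irreducibility would only give equivalence of transition probabilities, not an $L^{p}$ density against a Gaussian reference measure, so the Gaussian negative-moment bound cannot be transferred. The paper instead proves the negative moments directly from the dynamics: it applies It\^o's formula to $\varphi_\epsilon(x)=(\epsilon+|Px|^2)^{-k}$, where $P$ projects onto $\mathrm{span}(e_1,\dots,e_d)$ with $d$ chosen large ($d>2(k+1)$ and $kd>2k(k+1)+2\rho$), integrates over a stationary solution with law $\nu$, and balances the resulting terms via H\"older and Young inequalities so that the dimension factor $kd$ dominates; letting $\epsilon\to0$ yields $|(-A)^{-\beta}x|^{-2}\in L^{k+1}(H,\nu)$. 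Without this (or an equivalent) argument, your proof of the key uniform bound $\|M^*(F^n_g)\|_{L^2(H,\nu)}\le K$ is incomplete.
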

\begin{proof}
Since obviously
$$
\lim_{n\to\infty}F^n_g=F_g\quad\mbox{\rm in}\;L^2(H,\nu;H)
$$
and $M^*$ is a closed operator, to prove that $F_g\in D(M^*)$  it is enough to show that there is a constant  $K>0$ such that
\begin{equation}
\label{e5.1}
\|M^*(F^n_g)\|_{L^2(H,\nu)}\le K,\quad\forall\;n\in\N.
\end{equation}
To prove \eqref{e5.1}  we shall proceed in two steps.\medskip

{\it Step 1}. We show that $|(-A)^{-\beta}x|^{-2}\in L^{k+1}(H,\nu) $ for all $k\in\N$. \medskip

Let $d\in \N$ (to be chosen later) and let $P$ be the orthogonal projector  on span $(e_1,...,e_d)$. Write
$$
\frac1{|(-A)^{-\beta}x|^2}\le \frac1{|(-A)^{-\beta}Px|^2}\le \frac{\alpha^\beta_d}{|Px|^2}.
$$  
Then it is enough to show that
$$
\frac1{|Px|^2}\in L^k(H,\nu)
$$
 We apply It\^o's formula to $\varphi_\epsilon(X(t))$ where $X$ is the solution to \eqref{e4.1} and 
$$
\varphi_\epsilon(x)=\frac1{(\epsilon+|Px|^2)^k},\quad x\in H,
$$
so that
$$
(D\varphi_\epsilon(x),u)=-\frac{2k\langle Px,Pu\rangle }{(\epsilon+|Px|^2)^{k+1}}\quad x,u\in H
$$
and
$$
D^2_x\varphi_\epsilon(x)(u,v)=-2k\frac{(Pu,Pv)}{(\epsilon+|Px|^2)^{k+1}}+4k(k+1)\frac{(Px,Pu)(Px,Pv)}{(\epsilon+|Px|^2)^{k+2}}\quad x,u,v\in H.
$$
Therefore
$$
\mbox{\rm Tr}\;[D^2_x\varphi_\epsilon(x)]=
- \frac{2kd}{(\epsilon+|Px|^2)^{k+1}}+4k(k+1)\frac{|Px|^2}{(\epsilon+|Px|^2)^{k+2}}
$$
and we have
 \begin{equation}
\label{e3.22}
\begin{array}{l}
\ds \varphi_\epsilon(X(t))-\varphi_\epsilon(x) -2k
\frac{|(-A)^{1/2}PX(t)|^2}{(\epsilon+|PX(t)|^2)^{k+1}}\;dt =-2k\,\frac{\langle Pp(X(t)),PX(t)\rangle}{(\epsilon+|PX(t)|^2)^{k+1}}\;dt\\
\\
\ds - \frac{kd}{(\epsilon+|PX(t)|^2)^{k+1}}\,dt+2k(k+1)\frac{|P(X(t)|^2}{(\epsilon+|P(X(t)|^2)^{k+2}}\,dt+ dG_t,
\end{array} 
\end{equation}
where $G_t$ is a martingale.

We assume now that $X$ is a stationary process with law $\nu$. We integrate between $0$ and $1$ the identity \eqref{e3.22} and we take the expectation.
We get
\begin{equation}
\label{e3.23}
\begin{array}{l}
\ds   kd\int_H \frac{1}{(\epsilon+|Px|^2)^{k+1}}\,\nu(dx)=
2k\int_H
\frac{|(-A)^{1/2}Px|^2}{(\epsilon+|Px|^2)^{k+1}}\;\nu(dx)\\
\\
\ds -2k\int_H\frac{\langle Pp(x),Px\rangle}{(\epsilon+|Px|^2)^{k+1}}\;\nu(dx) +2k(k+1)\int_H\frac{|Px|^2}{(\epsilon+|Px|^2)^{k+2}}\,\nu(dx)\\
\\
=:I_1+I_2+I_3.
\end{array} 
\end{equation}
Let us estimate $I_1$.
 Since
$$
|(-A)^{1/2}Px|^2\le \alpha_d |Px|^2\le \alpha_d(\epsilon+|Px|^2),
$$
given $\rho>0$ and using H\"older's and Young's inequalities, we see that there is a constant 
$C_{\rho,k,d}>0$ such that
\begin{equation}
\label{e9d}
\begin{array}{lll}
| I_1|&\le& \ds 2k
\alpha_d \int_H
\frac{1}{(\epsilon+|Px|^2)^{k}}\;\nu(dx) \le 2k\alpha_d\left( \int_H
\frac{1}{(\epsilon+|Px|^2)^{k+1}}\;\nu(dx)  \right)^{k/(k+1)}\\
\\
&\le& \ds C_{\rho,k,d}+\rho\int_H
\frac{1}{(\epsilon+|Px|^2)^{k+1}}\;\nu(dx).
\end{array} 
\end{equation}
Concerning $I_2$, noticing that
$$
\langle Pp(x),Px\rangle\le |Pp(x)|\,|Px|\le  |Pp(x)|\,
(\epsilon+|Px|^2)^{1/2},
$$
using H\"older's and Young's inequalities and recalling \eqref{e1.4g}, we see that there is a constant 
$C_{\rho,k}>0$ such that
\begin{equation}
\label{e10d}
\begin{array}{l}
\ds |I_2|\le 2k\int_H\frac{|Pp(x)|}{(\epsilon+|Px|^2)^{k+1/2}}\;\nu(dx)\\
\\
\ds\le\left(\int_H|Pp(x)|^{2k+2} d\nu  \right)^{\frac{1}{2k+2}}\;\left(\int_H\frac{1}{(\epsilon+|Px|^2)^{k+1}}\;\nu(dx)   \right)^{\frac{2k+1}{2k+2}} \\
\\
\ds\le C_{\rho,k}+\rho\int_H\frac{1}{(\epsilon+|Px|^2)^{k+1}}\;\nu(dx)
\end{array} 
\end{equation}
Finally, obviously
\begin{equation}
\label{e11d}
|I_3| \le 2k(k+1)
\int_H\frac{1}{(\epsilon+|Px|^2)^{k+1}}\;\nu(dx).
\end{equation}
By \eqref{e9d},  \eqref{e10d} and  \eqref{e11d} it follows that
$$
\begin{array}{l}
\ds kd\int_H\frac{1}{(\epsilon+|Px|^2)^{k+1}}\;\nu(dx)\le C_{\rho,k,d}+ C_{\rho,k}\\
\\
\ds+(2k(k+1)+2\rho)
\int_H\frac{1}{(\epsilon+|Px|^2)^{k+1}}\;\nu(dx).
\end{array}
$$
Now we choose $d$ and $\rho$ such that
$$
d>2(k+1),\quad kd>2k(k+1)+2\rho
$$ 
and conclude that there exists $M>0$ such that
$$
\int_H\frac{1}{(\epsilon+|Px|^2)^{k+1}}\;\nu(dx)\le M.
$$
The conclusion follows letting $\epsilon\to 0$.\medskip

{\it Step 2}. Conclusion.\medskip

Set $\||(-A)^{-\beta}x|^{-2}\|_{L^{k}(H,\nu)}=D_k$
and 
write $M^*(F^n_g)=J^n_1+J^n_2,$ where
$$
J^n_1=- \frac{\mbox{\rm Tr}\, [(-A)^{-2\beta}P_n] }{n^{-1}+2|(-A)^{-\beta} x|^2}+\frac{4 |(-A)^{-2\beta}x|^2}{|(n^{-1}+(-A)^{-\beta} x|^2)^2},\quad J_2=\frac{\sum_{h=1}^n\alpha_h^{-\beta}x_hv_h}{n^{-1}2|(-A)^{-\beta} x|^2}.
$$
Since
$$
|(-A)^{-2\beta}x|^2\le \alpha_1^{-2\beta}|(-A)^{-\beta}x|^2,
$$
we have
\begin{equation}
\label{e4.10e}
J^n_1\le \frac{\mbox{\rm Tr}\;[(-A)^{-2\beta}]+2\alpha_1^{-2\beta}}{|(-A)^{-\beta}x|^2}
\end{equation}
Therefore by Step 1,
\begin{equation}
\label{e4.11e}
\|J^n_1\|_{L^2(H,\nu)}\le D_1(\mbox{\rm Tr}\;[(-A)^{-2\beta}]+2\alpha_1^{-2\beta}).
\end{equation}
Concerning $J_2^n$ we first notice that, thanks to \eqref{e3.5e} and \eqref{e1.4g} there is $L>0$ such that
$$
\begin{array}{l}
\ds\left\|\sum_{h=1}^\infty\alpha_h^{-\beta}x_hv_h\right\|_{L^4(H,\nu)}\le \sum_{h=1}^\infty \alpha_h^{-\beta} \|x_h\|_{L^8(H,\nu)}\,\|v_h\|_{L^8(H,\nu)}\\\\
\ds\le c_{8/7}\sum_{h=1}^\infty\alpha_h^{-\beta}\|x\|_{L^8(H,\nu)}\le L.
\end{array}
$$
Now  by   H\"older's inequality it follows that
\begin{equation}
\label{e3.21}
\|J^n_2\|_{L^2(H,\nu)}\le L  \||(-A)^{-1}x|^{-2}\| _{L^8(H,\nu)}\le LD_8.
\end{equation}
The conclusion follows.

\end{proof}

 \subsection{Surface measure of a  half-space}
 
   Let  $n=1,2,3$ and $g(x)=\langle x,b\rangle,$  with $b\in H$. Then for any $r\in \R$, $\{g=r\}$ is n half-space which we denote by $S_{b,r}$.   
   \begin{Proposition}
\label{p4.3e}
Assume that
\begin{equation}
\label{e3.19}
\sum_{j=1}^\infty\alpha_j^{-1} a_j<\infty.
\end{equation}
Then there exists   the  {\em surface measure} $\sigma_r$  on $S_{a,r}$ determined by   $\nu$.

\end{Proposition}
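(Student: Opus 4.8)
The plan is to verify, for the linear functional $g(x)=\langle x,a\rangle$ with $a=\sum_j a_je_j$, the two hypotheses of Theorem \ref{t4.1e} — that $g\in D(M)$ and that $F_g=\frac{Mg}{|Mg|^2}\in D(M^*)$ — and then invoke that theorem to produce $\sigma_r$ on $S_{a,r}$. The decisive structural point, which makes the half-space far softer than the ball of Proposition \ref{p4.2e}, is that $g$ is linear: its gradient is the \emph{constant} vector $Dg\equiv a$, so $Mg=(-A)^{-\beta}a$ is a fixed element of $H$ and (as soon as $a\neq0$) the field $F_g=\frac{(-A)^{-\beta}a}{|(-A)^{-\beta}a|^{2}}$ is likewise a constant element of $H$.

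First I would check $g\in D(M)$ by approximation, since $g\notin C^1_b(H)$. Take $\varphi_n(x)=\theta_n(\langle x,a\rangle)$ with $\theta_n\in C^1_b(\R)$ satisfying $\theta_n(s)=s$ for $|s|\le n$, $|\theta_n'|\le1$ and $\theta_n'\to1$ pointwise. Then $\varphi_n\in C^1_b(H)$ and $(-A)^{-\beta}D\varphi_n(x)=\theta_n'(\langle x,a\rangle)\,(-A)^{-\beta}a$; because $\nu$ has finite moments of all orders by \eqref{e1.4g}, dominated convergence yields $\varphi_n\to g$ in $L^p(H,\nu)$ and $(-A)^{-\beta}D\varphi_n\to(-A)^{-\beta}a$ in $L^p(H,\nu;H)$. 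Hence $g\in D(M)$ with $Mg=(-A)^{-\beta}a$, and $F_g$ is well defined.

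The core of the argument is $F_g\in D(M^*)$, where I would mirror Step 2 of Proposition \ref{p4.2e}. Approximate $F_g$ by the finite--dimensional fields $F_g^n=\frac{(-A)^{-\beta}P_na}{|(-A)^{-\beta}a|^{2}}$, which converge to $F_g$ in $L^2(H,\nu;H)$. Each $F_g^n$ is a constant field, so in formula \eqref{e3.20e} the term $\mbox{\rm div}\,[(-A)^{-\beta}F_g^n]$ vanishes identically and only the $v_h$--sum remains:
\[
M^*(F_g^n)=\sum_{h=1}^{n}(F_g)_h\,v_h,\qquad (F_g)_h=\frac{\alpha_h^{-\beta}a_h}{|(-A)^{-\beta}a|^{2}}.
\]
By Corollary \ref{c3.6e} one has the uniform bound $\|v_h\|_{L^q(H,\nu)}\le C_p$, so by the triangle inequality together with the summability hypothesis \eqref{e3.19} the series $\sum_h(F_g)_h\,v_h$ converges absolutely in $L^q(H,\nu)$; in particular $\|M^*(F_g^n)\|_{L^2(H,\nu)}$ is bounded uniformly in $n$. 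Since $M^*$ is closed, this forces $F_g\in D(M^*)$, and passing to the limit identifies $M^*(F_g)$ with the expression \eqref{e4.2e}.

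With both hypotheses of Theorem \ref{t4.1e} established, the existence of the surface measure $\sigma_r$ on $S_{a,r}$ follows at once. The only genuine obstacle is the membership $F_g\in D(M^*)$, i.e.\ the $L^q(H,\nu)$--convergence of $\sum_h(F_g)_h\,v_h$, which is precisely what the summability condition \eqref{e3.19} and the uniform estimate of Corollary \ref{c3.6e} are designed to secure. In contrast with Proposition \ref{p4.2e}, the constancy of $F_g$ both annihilates the divergence contribution and removes entirely the delicate negative--moment estimate $|(-A)^{-\beta}x|^{-2}\in L^{k+1}(H,\nu)$, so no analogue of Step 1 is needed here.
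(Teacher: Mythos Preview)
Your proof is correct and follows essentially the same route as the paper: compute $Mg=(-A)^{-\beta}a$, observe that $F_g$ is a constant vector so the divergence term in \eqref{e3.20e} drops out, and control the remaining series $\sum_h\alpha_h^{-\beta}a_h\,v_h$ via the uniform $L^q$ bound on $v_h$ from Corollary \ref{c3.6e} together with the summability hypothesis, closing via approximation and the closedness of $M^*$. You are in fact more careful than the paper, which simply asserts $Mg=(-A)^{-\beta}a$ and writes down $M^*(F_g)$ directly, deferring the justification to ``a similar approximation argument as above''; your explicit verification that $g\in D(M)$ by truncation and your use of the finite-rank $F_g^n$ make the argument self-contained.
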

\begin{proof}
 We  have $Mg=(-A)^{-\beta}b$ and so
 $$
 F_g=\frac{(-A)^{-\beta}b}{|(-A)^{-\beta}b|^2}.
 $$
 By \eqref{e3.20e} it follows that
 $$
 M^*(F_g)=\ \frac{1}{|(-A)^{-\beta}b|^2}\,\sum_{j=1}^\infty\alpha_j^{-\beta}v_j(x) b_j.
 $$
 Therefore
   $$
 \|M^*(F_g)\|_{L^2(H,\nu)} \le 
 \frac{C_2}{|(-A)^{-1}b|^2}\,\sum_{j=1}^\infty\alpha_j^{-1} b_j.
$$
The conclusion follows using a similar approximation argument as above.

\end{proof}\bigskip

\subsection*{Acknowledgement}  G. Da Prato is partially supported by GNAMPA from INDAM. 

\noindent A. Debussche is partially supported by the French government thanks to the
ANR program Stosymap. He also benefits from the support of the French government ``Investissements d'Avenir" program ANR-11-LABX-0020-01.

 Giuseppe Da Prato, Scuola Normale Superiore, 56126, Pisa, 
Italy.   e-mail:   giuseppe. daprato@sns.it \medskip

Arnaud Debussche, IRMAR and \'Ecole Normale Sup\'erieure de Rennes, Campus de Ker Lann, 37170 Bruz, France. e-mail:arnaud.debussche@ens-rennes.fr

\end{document}